\newcommand{\NN}{\mathbb{N}}
\newcommand{\ZZ}{\mathbb{Z}}
\newcommand{\SM}{\mathbf{Sm}_k}
\newcommand{\SCH}{\mathbf{Sch}_k}
\newcommand{\srarrow}{\twoheadrightarrow}
\newcommand{\irarrow}{\hookrightarrow}
\newcommand{\flag}{\mathcal{F}\ell\,}
\newcommand{\flagC}{\flag^{\text{\tiny $C$}}}
\newcommand{\Proj}{\mathbb{P}}
\newcommand{\Laz}{\mathbb{L}}
\newcommand{\trecd}{\cdot\cdot\cdot}
\newcommand{\tred}{\ldots}
\newcommand{\unddot}{_\textbf{\textbullet}}
\newcommand{\spec}{{\rm Spec\,}}
\newtheorem{theorem}{Theorem}[section]
\newtheorem{lemma}[theorem]{Lemma}
\newtheorem{proposition}[theorem]{Proposition}
\newtheorem{corollary}[theorem]{Corollary}
\newtheorem{definition}[theorem]{Definition}
\theoremstyle{definition} \newtheorem{remark}[theorem]{Remark}
\theoremstyle{definition} 
\theoremstyle{definition} 
\theoremstyle{definition} 
\date{}
\title{\textbf{Generalised symplectic Schubert classes}}
\author{THOMAS  HUDSON}
\begin{document}

\maketitle
\begin{abstract}
Under the assumption that the base field $k$ has characteristic 0, we compute the algebraic cobordism fundamental classes of a family of Schubert varieties isomorphic to full and symplectic flag bundles. We use this computation to prove a formula for the push-forward class of the Bott-Samelson resolutions associated to the symplectic flag bundle. We specialise our formula to connective $K$-theory and the Grothendieck ring of vector bundles, hence obtaining a description for all Schubert classes. 

\vspace*{1\baselineskip}
\noindent\textit{Key Words:} Algebraic cobordism, Schubert varieties, Flag bundles.
 
\noindent\textit{Mathematics Subject Classification 2000:} Primary: 14C25, 14M15; Secondary: 14F05, 19E15.
\end{abstract}

\tableofcontents

\section{Introduction} 

The question which represented the starting point of this work can be phrased as follows: ``given a symplectic flag bundle $\flagC V\rightarrow X$ with a full isotropic flag $W\unddot:=(W_1\irarrow \tred \irarrow W_n)\irarrow V$, how can one express the algebraic cobordism fundamental class of $\Omega_{w_0}$, its smallest Schubert variety?''.

 In order to provide some context and explain the relevance of the issue, it can be convenient to mention the work of Fulton on a closely related subject. In \cite{SchubertclassicalFulton} he considered the problem of describing the Schubert classes $[\Omega_w]_{CH}$ of both symplectic and orthogonal flag bundles and he gave a recursive description which only required him to identify the explicit description of the class associated to $\Omega_{w_0}$, the simplest of all Schubert varieties. All others were recovered from this special element through the use of $\partial_i$, the so-called divided difference operators. At the level of geometry the construction relies on a family of desingularizations $R_I\stackrel{r_I}\longrightarrow \flagC V$ known as Bott-Samelson resolutions and, by design, the quantities that are actually being computed are the push-forwards $\mathcal{R}_I:=r_{I*}[R_I]_{CH}$, which one proves to coincide with the Schubert classes.
 
This approach had previously been used in \cite{FlagsFulton}, where the Schubert classes of the full flag bundle $\flag V$ were expressed by means of the double Schubert polynomials of Lascoux and Sch\"{u}tzenberger. It is interesting to observe that in this case Fulton's computation of $[\Omega_{w_0}]_{CH}$ proved to be extremely general: in \cite{PieriFulton} Fulton and Lascoux successfully adapted it to the Grothendieck ring of vector bundles $K^0$ and more recently, in \cite{ThomHudson}, both proofs have been generalised to algebraic cobordism.

Algebraic cobordism, denoted $\Omega^*$, was constructed in \cite{AlgebraicLevine} by Levine and Morel as an algebro-geometric analogue of Quillen's complex cobordism $MU^*$. Our interested in this specific example of oriented cohomology theory is motivated by its universality. For any other such theory $A^*$ there exists a unique natural transformation $\vartheta_A: \Omega^* \rightarrow A^*$ preserving all given structures: pull-back and push-forward morphisms as well as Chern classes. In particular this implies that all the formulas which are valid in $\Omega^*$ can be specialised to other oriented cohomology theories such as, for instance, $CH^*$ and $K^0[\beta,\beta^{-1}]$, a graded version of $K^0$. 

In order to appreciate in which ways a general $A^*$ may differ from $CH^*$, it can be convenient to consider the extra freedom allowed to Chern classes. In the Chow ring the first Chern class behaves linearly with respect to the tensor product of line bundles, while in general this is no longer the case. Instead, there exist unique power series $F_A\in A^*(k)[[u,v]]$ and $\chi_A\in A^*(k)[[u]]$ such that, for any choice of vector bundles $L,M\rightarrow X$, one has
$$c_1^A(L\otimes M)=F_A\Big(c^A_1(L),c^A_1(M)\Big) \quad\text{ and }\quad c_1^A(L^\vee)=\chi_A\big(c_1^A(L)\big).$$ 
These two series are usually referred to as the formal group law and the formal inverse associated to $A^*$ and they play a key role in expressing $[\Omega_{w_0}]_\Omega$. It is worth pointing out that the universality of algebraic cobordism is also reflected at this level. In fact, Levine and Morel show that the pair $(\Omega^*(k),F_\Omega)$ is itself universal among formal group laws, with the coefficient ring being isomorphic to the Lazard ring $\Laz$. 

In order to stress the differences with the generalised setting, let us now say a few words about Fulton's computation of the initial class. From the geometric point of view, his approach in dealing with the symplectic case aims at reducing the problem to a situation in which $\flagC V$ is replaced by a full flag bundle, more specifically $\flag W_n$. He performed this by identifying the fundamental class of the locus over which $U_n$, the largest bundle in the universal isotropic flag $U\unddot$, coincides with $W_n$. Since for full flag bundles the formula was already known, he simply had to multiply together the two answers.
    
Although theoretically one could follow the same strategy, the identification of the first of the two classes requires one to perform explicit manipulations of certain Chern polynomials, something extremely difficult to do in cobordism, given the increase in complexity due to the formal group law. As an alternative, one can reverse the procedure and start from the opposite end, considering the locus where $U_1$  is equal to $W_1$. Not only the resulting subscheme happens to be a Schubert variety of $\flagC V$, it is itself isomorphic to a symplectic flag bundle of smaller size. As a result the construction can be applied again and it is possible to make use of inductive methods to compute the fundamental classes of the elements of this nested family of Schubert variety, among which there is also $\Omega_{w_0}$.


\begin{theorem} \label{thm SFlag}
Let $V$ be a vector bundle of rank $2n$ over $X\in \SM$, endowed with an everywhere nondegenerate skew-symmetric form and let $W\unddot\subset V$ be a full isotropic flag of subbundles. Let moreover $U\unddot$ denote the universal isotropic flag of $\flagC V$. Fix a nonnegative integer $m< n$ and set $V_m:=W_{n-m}^\perp/W_{n-m}$. As an element of $\Omega^*(\flagC V)$ the fundamental class of the Schubert variety $\flagC V_m$ is given by
$$[\flagC V_m]_{\Omega}=\prod_{\substack{i+j\leq n\\1\leq j\leq n-m}} F_\Omega\Big(x_i, \chi_\Omega(y_j)\Big) \cdot
\prod_{\substack{i+j\leq n+1\\1\leq j\leq n-m}} F_\Omega\Big(\chi_\Omega(x_i), \chi_\Omega(y_j)\Big)$$
where $x_i=c_1\big(U_{n+1-i}/U_{n-i}\big)$ and $y_i:=c_1\big(W_i/W_{i-1}\big)$.
\end{theorem}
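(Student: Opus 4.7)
The plan is to proceed by descending induction on $m$. The formal base case $m=n$ is trivial, since $\flagC V_n = \flagC V$ and both products in the formula are empty. For the inductive step, I realise $\flagC V_m$ as the zero locus of a natural regular section of a vector bundle on $\flagC V_{m+1}$, and exploit the identification in $\Omega^*$ of the fundamental class of a regular vanishing locus with the top Chern class of the ambient bundle.

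The key geometric input is that inside $\flagC V_{m+1}$ the universal isotropic flag already satisfies $U_{n-m-1} = W_{n-m-1}$, so the closed subscheme $\flagC V_m$ is cut out by the single additional equation $U_{n-m} = W_{n-m}$. Writing $L := W_{n-m}/W_{n-m-1}$ and $\ell := U_{n-m}/U_{n-m-1}$ for the relevant line subbundles of $V_{m+1}$, this is the vanishing of the composition $\pi^* L \hookrightarrow \pi^* V_{m+1} \twoheadrightarrow \pi^* V_{m+1}/\ell$, a section of $E := \pi^* L^\vee \otimes (\pi^* V_{m+1}/\ell)$ on $\flagC V_{m+1}$. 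Since $E$ has rank $2m+1$ and both $\flagC V_m$, $\flagC V_{m+1}$ are smooth with relative codimension $(m+1)^2 - m^2 = 2m+1$ matching, the section is regular, whence
$$[\flagC V_m]_{\Omega^*(\flagC V_{m+1})} = c_{2m+1}^\Omega(E).$$

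To evaluate $c_{2m+1}^\Omega(E)$ I use the splitting principle. The universal symplectic filtration $U\unddot \subset U\unddot^\perp$ equips $\pi^* V_{m+1}$ with a complete flag whose Chern roots are $\{x_1,\ldots,x_{m+1}\}$ together with $\{\chi_\Omega(x_1),\ldots,\chi_\Omega(x_{m+1})\}$, the latter produced by the symplectic isomorphism $U_{j-1}^\perp/U_j^\perp \cong (U_j/U_{j-1})^\vee$. Removing the root $x_{m+1} = c_1(\ell)$ gives the Chern roots of $\pi^* V_{m+1}/\ell$; combined with $c_1(L^\vee) = \chi_\Omega(y_{n-m})$ and the formal group law applied factor by factor, I obtain
$$c_{2m+1}^\Omega(E) = \prod_{i=1}^{m} F_\Omega\big(x_i, \chi_\Omega(y_{n-m})\big) \cdot \prod_{i=1}^{m+1} F_\Omega\big(\chi_\Omega(x_i), \chi_\Omega(y_{n-m})\big) =: b_m,$$
after invoking commutativity of $F_\Omega$. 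A direct inspection shows that $b_m$ is exactly the ratio of the target formula at level $m$ to its value at level $m+1$.

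To conclude, I push forward along the closed immersion $\iota:\flagC V_{m+1} \hookrightarrow \flagC V$: since $b_m$ is the restriction of a polynomial expression in the globally defined classes $x_i, y_j$, the projection formula gives $\iota_*[\flagC V_m]_{\Omega^*(\flagC V_{m+1})} = b_m \cdot [\flagC V_{m+1}]_{\Omega^*(\flagC V)}$, and the inductive hypothesis applied to $m+1$ closes the argument. The main delicate point will be verifying regularity of the section so that the scheme-theoretic zero locus genuinely coincides with $\flagC V_m$ without any thickening; the remainder is a mechanical application of the splitting principle and the formal group law calculus inherited from the universality of $(\Omega^*(k), F_\Omega)$.
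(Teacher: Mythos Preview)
Your proposal is correct and follows essentially the same route as the paper: both arguments realise $\flagC V_m$ inside $\flagC V_{m+1}$ as the zero scheme of the natural section of $L_{n-m}^\vee\otimes(\pi^*V_{m+1}/U_1^{(m+1)})$, invoke smoothness plus the codimension count to identify its class with the top Chern class, compute that class via the symplectic splitting, and then induct using the projection formula. The only cosmetic differences are that the paper packages the one-step computation as a separate lemma and starts the induction at $m=n-1$ rather than at the vacuous case $m=n$.
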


It is interesting to observe that the same procedure can be applied to full flag bundles. In this case one is able to recover and improve \cite[Proposition 4.5]{ThomHudson}, which only provided the fundamental class of $\flag V_1\simeq \Omega_{w_0}$.

\begin{theorem}\label{thm Flag}
Let $V$ be a vector bundle of rank $n$ over $X\in \SM$ and $W\unddot$ a full flag of subbundles. Fix a positive integer $m\leq n$, set $V_m:=V/W_{n-m}$ and denote by $U\unddot$ the universal flag of subbundles of $\flag V$. As an element of $\Omega^*(\flag V)$ the fundamental class of the Schubert variety $\flag V_m$ is given by
$$[\flag V_m]_{\Omega}=\prod_{\substack{i+j\leq n\\1\leq j\leq n-m}} F_\Omega\Big(x_i, \chi_\Omega(y_j))\Big)$$
where  $x_i:=c_1\big(U_{n+1-i}/U_{n-i}\big)$ and $y_i:=c_1\big(W_i/W_{i-1}\big)$.
 \end{theorem}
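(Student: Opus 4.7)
The plan is to argue by downward induction on $m$, with trivial base case $m=n$: there $V_n = V$, the indexing set of the product is empty, and $\flag V_n = \flag V$ has fundamental class $1$. For the inductive step I realise $\flag V_m$ as a regularly embedded codimension-$m$ subscheme of $\flag V_{m+1}$ and push the resulting top Chern class forward to $\flag V$.

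The geometry of the inductive step is as follows. Since $V_{m+1} = V/W_{n-m-1}$ has rank $m+1$, the Schubert variety $\flag V_{m+1}$ is itself a full flag bundle of rank $m+1$, carrying the universal flag $\bar U\unddot$ with $\bar U_j := U_{n-m-1+j}/W_{n-m-1}$ and the reference flag $\bar W\unddot$ induced from $W\unddot$. Under this identification $\flag V_m \irarrow \flag V_{m+1}$ is precisely the locus where $\bar U_1 = \bar W_1$, and comparing the relative dimensions $(m+1)m/2$ and $m(m-1)/2$ over $X$ shows this locus has codimension $m$. I then interpret the condition $\bar W_1 \subset \bar U_1$ as the vanishing of the composite section
$$\bar W_1 \irarrow V_{m+1} \srarrow V_{m+1}/\bar U_1,$$
which is a section of the rank-$m$ bundle $\bar W_1^\vee \otimes (V_{m+1}/\bar U_1)$.

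Once regularity of this section is established, the top Chern class formula in $\Omega^*$ gives $[\flag V_m]_{\flag V_{m+1}} = c_m^\Omega\bigl(\bar W_1^\vee \otimes (V_{m+1}/\bar U_1)\bigr)$. Now $\bar W_1 = W_{n-m}/W_{n-m-1}$ has first Chern class $y_{n-m}$, and $V_{m+1}/\bar U_1 = V/U_{n-m}$ carries the filtration by $U_j/U_{n-m}$ for $j = n-m, \dots, n$, whose graded pieces are the line bundles $U_j/U_{j-1}$ with first Chern classes $x_m, x_{m-1}, \dots, x_1$. The splitting principle together with the definition of $F_\Omega$ and $\chi_\Omega$ therefore yields
$$c_m^\Omega\bigl(\bar W_1^\vee \otimes (V_{m+1}/\bar U_1)\bigr) = \prod_{i=1}^m F_\Omega\bigl(\chi_\Omega(y_{n-m}),\, x_i\bigr) = \prod_{i=1}^m F_\Omega\bigl(x_i,\, \chi_\Omega(y_{n-m})\bigr),$$
the second equality using the commutativity of the formal group law.

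To close the induction I push forward along $\iota : \flag V_{m+1} \irarrow \flag V$. Every $x_i$ and $y_j$ is pulled back from $\flag V$, so the projection formula gives
$$[\flag V_m]_\Omega = \iota_*\bigl([\flag V_m]_{\flag V_{m+1}}\bigr) = \prod_{i=1}^m F_\Omega\bigl(x_i,\, \chi_\Omega(y_{n-m})\bigr) \cdot [\flag V_{m+1}]_\Omega,$$
and assembling this with the inductive hypothesis for $\flag V_{m+1}$ produces precisely the claimed product, the new factors filling in the column $j = n-m$ of the index set $\{(i,j) : i+j \leq n,\ 1 \leq j \leq n-m\}$. The step I expect to be the main obstacle is the regularity claim, i.e.\ that the scheme-theoretic zero locus of the section really is the lci subscheme $\flag V_m$ of codimension $m$, and not a thickening; this is exactly what licenses the replacement of the vanishing locus by a top Chern class in $\Omega^*$, and I would verify it by a local calculation analogous to the one implicit in the symplectic case of Theorem \ref{thm SFlag}, exploiting the smoothness of both flag bundles.
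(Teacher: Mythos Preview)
Your argument is essentially the same as the paper's: the paper isolates your inductive step as a separate lemma (Lemma~\ref{lem push}) identifying $\flag V_j\hookrightarrow\flag V_{j+1}$ with the zero scheme of the section $\pi_{j+1}^*W_1^{(j+1)}\to\pi_{j+1}^*V_{j+1}/U_1^{(j+1)}$ and computing the same top Chern class, then runs the induction on $l=n-m$ exactly as you do. Your ``main obstacle'' is dispatched there not by a local calculation but by observing that $\flag V_j$ is smooth of the correct codimension and invoking \cite[Lemma~2.1]{ThomHudson}, which guarantees that under these hypotheses the fundamental class in $\Omega^*$ equals the top Chern class of the bundle.
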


At this point the reader might wonder whether there exists a generalization to $\Omega^*$ of the divided difference operators of the Chow ring, allowing one to compute the push-forward classes $\mathcal{R}_I^\Omega$. This is indeed the case, in fact one has operators $\overline{A_i}: \Omega^*(\flagC V)\rightarrow\Omega^*(\flagC V)$ given by
$$\overline{A_0}(f):=(1+\sigma_0)\frac{f}{F_\Omega\Big(\chi_\Omega(x_1),\chi_\Omega(x_1)\Big)}\quad
\text{ and } \quad\overline{A_i}(f):=(1+\sigma_i)\frac{f}{F_\Omega\Big(x_i,\chi_\Omega(x_{i+1})\Big)},$$
where $i\in\{1,\tred,n-1\}$ and the operators $\sigma_0$ and $\sigma_i$ respectively exchange $x_1$ with $\chi_\Omega(x_1)$ and $x_i$ with $x_{i+1}$. These generalised divided difference operators were first introduced in \cite{SchubertBressler} by Bressler and Evens to study the Schubert calculus of the complex cobordism of the flag manifold and later, in \cite{SchubertHornbostel}, they were trasferred to algebraic cobordism by Hornbostel and Kiritchenko. Through the use of these operators one obtains the following symplectic analogue of \cite[Theorem 4.8]{ThomHudson}.

\begin{theorem}
Under the hypothesis and notations of theorem \ref{thm SFlag}, let $I=(i_1,...,i_l)$ be any tuple  with $i_j\in\{0,1,\tred,n-1 \}$. As an element of $\Omega^*(\flagC V)$ the push-forward class associated to  $R_I\stackrel{r_I}\rightarrow \flagC V$ is given by
$$\mathcal{R}_I=\overline{A_{i_l}}\trecd \overline{A_{i_1}}[\Omega_{w_0}]_\Omega
\quad\text{ with }\quad [\Omega_{w_0}]_\Omega=
\prod_{i+j\leq n} F_\Omega\Big(x_i, \chi_\Omega(y_j))\Big)  \cdot
 \prod_{i+j\leq n+1} F_\Omega\Big( \chi_\Omega(x_i),  \chi_\Omega(y_j)\Big).$$
\end{theorem}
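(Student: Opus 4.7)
The plan is to argue by induction on the length $l$ of the tuple $I$, realising each operator $\overline{A_{i_j}}$ as the push-pull endomorphism of a suitable $\Proj^1$-bundle in the standard tower of $\flagC V$. For the base case $l=0$, by construction $R_\emptyset$ is the tautological section of $\flagC V\rightarrow X$ picking out the flag $W\unddot$, so $\mathcal{R}_\emptyset=[\Omega_{w_0}]_\Omega$; the closed expression for this class is obtained from Theorem \ref{thm SFlag} by specialising at $m=0$, noting that $W_n$ being Lagrangian in $V$ forces $W_n^\perp=W_n$, hence $V_0=0$ and $\flagC V_0=\Omega_{w_0}$, while the index ranges $i+j\leq n$, $1\leq j\leq n$ and $i+j\leq n+1$, $1\leq j\leq n$ reduce to those appearing in the statement.

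For the inductive step, set $I'=(i_1,\tred,i_{l-1})$. By the Bott-Samelson construction $R_I$ is the fibre product $R_{I'}\times_{Y_{i_l}}\flagC V$, where $p_{i_l}:\flagC V\rightarrow Y_{i_l}$ is the natural $\Proj^1$-bundle associated to the simple reflection $s_{i_l}$ and $Y_{i_l}$ is the partial symplectic flag bundle in which $U_{n+1-i_l}$ is forgotten (for $i_l\geq 1$) or $U_n$ is forgotten (for $i_l=0$). Denote by $\pi_l:R_I\rightarrow R_{I'}$ the other projection, itself a $\Proj^1$-bundle. The base change formula for this Cartesian square reads $r_{I*}\circ \pi_l^*=p_{i_l}^*\circ(p_{i_l}\circ r_{I'})_*$; evaluating at $1\in\Omega^*(R_{I'})$ gives $\mathcal{R}_I=p_{i_l}^*\big(p_{i_l*}\mathcal{R}_{I'}\big)$. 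Together with the inductive hypothesis, this reduces the statement to the identity $p_{i_l}^*\circ p_{i_l*}=\overline{A_{i_l}}$ as endomorphisms of $\Omega^*(\flagC V)$. When $i_l\in\{1,\tred,n-1\}$ this is the type $A$ calculation already performed for $\Omega^*$ in \cite{SchubertHornbostel}, so only the node $i_l=0$ requires separate treatment.

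The main obstacle is therefore the verification that $p_0^*\circ p_{0*}=\overline{A_0}$. The fibre of $p_0$ over a point of $Y_0$ parametrises isotropic lines in the rank $2$ symplectic bundle $U_{n-1}^\perp/U_{n-1}$; since every line in a symplectic plane is automatically isotropic, this fibre is just $\Proj(U_{n-1}^\perp/U_{n-1})\simeq \Proj^1$, with tautological subbundle $U_n/U_{n-1}$ and $c_1(\mathcal{O}(-1))=x_1$. Because $U_{n-1}^\perp/U_{n-1}$ has trivial determinant, the relative tangent bundle of $p_0$ is $\mathcal{O}(2)$, whose first Chern class, computed through the formal group law, is $F_\Omega\big(\chi_\Omega(x_1),\chi_\Omega(x_1)\big)$, precisely the denominator of $\overline{A_0}$. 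The involution of the fibre exchanging the tautological subbundle with its symplectic complement translates into the substitution $\sigma_0:x_1\leftrightarrow \chi_\Omega(x_1)$ at the level of first Chern classes. A direct application of the projection formula for a $\Proj^1$-bundle, together with the standard formal-group-law description of $p_{0*}$, then yields the required expression for $\overline{A_0}$ and completes the argument.
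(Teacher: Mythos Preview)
Your proof is correct and follows essentially the same route as the paper's: induction on the length of $I$, with the base case supplied by Theorem~\ref{thm SFlag} at $m=0$ and the inductive step obtained from base change in the Bott--Samelson square together with the identification $p_{j}^*p_{j*}=\overline{A_j}$ (which the paper imports wholesale from \cite{SymmetricVishik} and \cite{SchubertHornbostel} rather than re-deriving the $j=0$ case as you do). One small slip: for $i_l\ge 1$ the level forgotten in $Y_{i_l}=\flagC_{\widehat{i_l}}V$ is $U_{n-i_l}$, not $U_{n+1-i_l}$, since $A_{i_l}$ exchanges $x_{i_l}=c_1(U_{n+1-i_l}/U_{n-i_l})$ with $x_{i_l+1}=c_1(U_{n-i_l}/U_{n-i_l-1})$.
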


Unfortunately, for a general oriented cohomology theory the overall situation is not as straightforward as in the Chow ring. First of all, from the general theory not all Schubert varieties have a well defined fundamental class, only those that are l.c.i. schemes. This in fact presents an interesting challenge, which element of $\Omega^*(\flag V)$ (or of $\Omega^*(\flagC V)$) should one attach to a given Schubert cell, given that the cellular decomposition of the full (or symplectic) flag bundle still garantees the existence of an additive basis?

 One may expect, again by taking $CH^*$ as a model, that it suffices to consider the push-forward of the fundamental class of any desingularization. However in this case the result may depend on the choice made. For instance, unlike what happens with the regular divided difference operators, the $\overline{A_i}$'s do not satisfy the braid relations and as a consequence different $R_I$'s associated to the same Schubert variety $\Omega_w$ may produce different classes. Moreover, in the cases in which it is defined, these classes may even all differ from the natural choice $[\Omega_w]_\Omega$. This, perhaps, might convince the reader of the value of having at our disposal, together with the classes $\mathcal{R}_I^\Omega$, also the closed formulas of theorems \ref{thm SFlag} and \ref{thm Flag}.

All these difficulties can be avoided in case one decides to consider connective $K$-theory, denoted $CK^*$, the most general theory for which the braid relations still hold for the associated divided difference operators $\phi_i$.  In particular, it is possible to associate to every element $w$ of $\mathbf{W}_n$, the Weyl group parametrizing the Schubert varieties of $\flagC V$, a well defined operator $\phi_w$. As a functor $CK^*$ is defined as $\Omega^*\otimes_\Laz\ZZ[\beta]$ and its associated formal group law and formal inverse are given as follows: 
$$u\oplus v:=F_{CK}(u,v)=u+v-\beta\cdot uv \quad  \text{ and } \quad  \ominus u:=\chi_{CK}(u)=\frac{-u}{1-\beta u}$$  
 In this theory, which generalises both $CH^*$ and $K^0$, fundamental classes happen to be well defined for all Schubert varieties and one can obtain the following generalization of Fulton's result.

\begin{corollary}
Under the hypothesis and notations of theorem \ref{thm SFlag},  for every $w\in\mathbf{W}_n$ the $CK^*$ fundamental class of the Schubert variety $\Omega_w$ is given by
$$[\Omega_w]_{CK}=\overline{\phi_{w_0w}}\Big([\Omega_{w_0}]_{CK} \Big)
\quad \text{ where }\quad
[\Omega_{w_0}]_{CK}=\prod_{i+j\leq n} x_i\ominus y_j  \cdot
 \prod_{i+j\leq n+1} \ominus x_i \ominus y_j.$$

 Furthermore, if one sets $\beta$ equals to 0 and 1, the previous formula respectively provides a description of $[\Omega_w]_{CH}$ and of $[\mathcal{O}_{\Omega_w}]_{K^0}$.     
\end{corollary}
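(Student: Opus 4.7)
The plan is to derive the corollary by applying the universal natural transformation $\vartheta_{CK}:\Omega^*\to CK^*$ to the two preceding results and then exploiting the better behaviour of $CK^*$. Because $\vartheta_{CK}$ preserves fundamental classes and Chern classes and sends $F_\Omega$ and $\chi_\Omega$ respectively to $F_{CK}=\oplus$ and $\chi_{CK}=\ominus$, applying it to the formula for $[\Omega_{w_0}]_\Omega$ rewrites each factor $F_\Omega(x_i,\chi_\Omega(y_j))$ as $x_i\ominus y_j$ and each $F_\Omega(\chi_\Omega(x_i),\chi_\Omega(y_j))$ as $\ominus x_i\ominus y_j$, yielding immediately the stated closed expression for $[\Omega_{w_0}]_{CK}$.

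For an arbitrary $w\in\mathbf{W}_n$ I would first note that $\vartheta_{CK}$ is equivariant with respect to the divided difference operators: since those operators are built exclusively out of the formal group law and the swapping maps $\sigma_j$, the image of each $\overline{A_i}$ is the corresponding $CK^*$ operator $\overline{\phi_i}$. Pushing the previous theorem through $\vartheta_{CK}$ therefore produces
$$\mathcal{R}_I^{CK}=\overline{\phi_{i_l}}\trecd\overline{\phi_{i_1}}\Big([\Omega_{w_0}]_{CK}\Big)$$
for every tuple $I=(i_1,\tred,i_l)$. At this point I would invoke the distinctive feature of $CK^*$ recalled in the introduction: the $\overline{\phi_i}$ satisfy the braid relations of $\mathbf{W}_n$, so the composition $\overline{\phi_v}:=\overline{\phi_{i_l}}\trecd\overline{\phi_{i_1}}$ depends only on the Weyl group element $v=s_{i_1}\trecd s_{i_l}$ whenever the expression is reduced.

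It remains to identify $\mathcal{R}_I^{CK}$ with $[\Omega_w]_{CK}$ when $I$ is chosen to be a reduced expression of $v=w_0w$, so that $r_I:R_I\to\flagC V$ factors as a birational morphism onto $\Omega_w$. This is exactly the point at which I would use the second special feature of connective $K$-theory emphasised in the introduction, namely that every Schubert variety admits a well-defined $CK^*$ fundamental class, and that this class coincides with the push-forward $r_{I*}[R_I]_{CK}$ of any Bott-Samelson resolution. I expect this last step to be the main obstacle of the argument, since for a generic oriented cohomology theory such an identification may fail and different resolutions of the same Schubert variety may even produce different push-forward classes, as underlined earlier in the paper.

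For the specialisation, I would simply observe that setting $\beta=0$ converts $(F_{CK},\chi_{CK})$ into $(u+v,\,-u)$, the data of $CH^*$, while setting $\beta=1$ gives $(u+v-uv,\,-u/(1-u))$, the data of $K^0$. Since the resulting specialisation maps $CK^*\to CH^*$ and $CK^*\to K^0$ are morphisms of oriented cohomology theories, they send $[\Omega_w]_{CK}$ to $[\Omega_w]_{CH}$ and $[\mathcal{O}_{\Omega_w}]_{K^0}$ respectively, from which the two classical descriptions follow by substituting $\beta=0$ and $\beta=1$ in the closed formula.
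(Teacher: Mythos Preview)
Your outline is correct and follows the same route as the paper: specialise the cobordism formula for $\mathcal{R}_I$ via $\vartheta_{CK}$, use the braid relations in $CK^*$ to replace the tuple $I$ by the element $w_0w$, and then identify $\mathcal{R}_I^{CK}$ with $[\Omega_w]_{CK}$; finally specialise $\beta$ to $0$ and $1$.

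The one place where your write-up stops short is the step you yourself flag as the ``main obstacle''. You appeal to a ``second special feature of $CK^*$ emphasised in the introduction'', but the introduction only asserts that $CK^*$ fundamental classes of Schubert varieties exist, not that they agree with the Bott--Samelson push-forward. In the paper this identification is not a black box: it is obtained from the fact that $\Omega_w$ has at worst rational singularities (Proposition~\ref{prop resolution}\,(2ii)) together with Lemma~\ref{lem class}, which says that for a scheme with rational singularities the $CK^*$ fundamental class equals the push-forward of any resolution. Once you plug in these two ingredients, your argument and the paper's coincide.
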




The internal organisation of this work is as follow. Section 2 contains some basic definitions and notions related to oriented cohomology theories, algebraic cobordism and connective $K$-theory. All the background material related to full flags bundles and symplectic flag bundles is contained in section 3, together with the actual computations of the fundamental classes.  At first we focus our attention on the full flag bundle, describing the situation in detail, and then we explain what needs to be modified in order to be able to deal with the symplectic case. Finally, in section 4 we introduce Bott-Samelson resolution, we compute the classes $\mathcal{R}_I^A$ for any oriented cohomology theory and we specialise this result to connective $K$-theory as well as to $CH^*$ and $K^0$.

\paragraph*{Acknowledgements: }

The early steps of this work took place while I was at The University of Nottingham and it further developed during my time at KAIST. I would like to thank both institutions for the excellent working conditions and in particular Alexander Vishik and Jinhyun Park for their support and encouragement. I would also like to thank Marc Levine and Jerzy Weyman for useful discussions. 

The support of the National Research Foundation of Korea (NRF) through the grants funded by the Korea government (MSIP) (No. 2013-042157, 2014-001824 and 2011-0030044) and of the Engineering and Physical Sciences Research Council (EPSRC) through the Responsive Mode grant EP/G032556/1
is gratefully acknowledged.

\paragraph*{Notations and conventions:}

Given a field $k$ of characteristic 0, we will write $\SCH$ to denote the category of separated schemes of finite type over $\spec k$ and $\SCH'$ for its subcategory obtained by considering only projective morphisms. $\SM$ will represent the full subcategory of $\SCH$ consisting of schemes smooth and quasi-projective over $\spec k$. In general by smooth morphism we will always mean smooth and quasi-projective.

\section{Preliminaries}\label{section prem}

The aim of this section is to provide a brief introduction to oriented cohomology theories and in particular to algebraic cobordism, setting up the notation needed in order to make use of its universality. For an exhaustive treatment we refer the reader to \cite[Chapter 1]{AlgebraicLevine}.  We will also present a modification of the projective bundle formula for the special case of vector bundles endowed with a non-degenerate skew-symmetric form.
 
\subsection{Oriented cohomology theories}

An oriented cohomology theory (OCT) cosists of a contravariant functor from $\SM$ to the category of graded abelian rings, together with a family of push-forwards maps $f_*$ associated to every projective morphism $f$. This collection of data is supposed to satisfy some functorial compatibilities as well as two properties of geometric nature: the projective bundle formula and the extended homotopy property.
 One well-known example of such objects is represented by the Chow ring $CH^*$ and, as it was mentioned in the introduction, it can be convenient to look at the differences in the behaviour of the Chern classes. In view of the projective bundle formula, every OCT $A^*$ is endowed with a theory of Chern classes and as a consequence, for any given vector bundle $E$ over a smooth scheme $X$ of rank $n$, one has classes $c^A_1(E),\tred, c^A_n(E)\in A^*(X)$. In the Chow ring the first Chern class is a homomorphism for the tensor product of line bundles, hence for every pair $L,M\rightarrow X$ one has:
$$c_1^{CH}(L\otimes M)=c_1^{CH}(L)+ c_1^{CH}(M).$$
Although in general this is no longer true for OCTs, it is still possible to provide a description of $c_1^A(L\otimes M)$ in terms of the first Chern classes of the factors and this can be achieved uniformly. In fact, there exists a unique power series $F_A\in A^*(k)[[u,v]]$ such that the equality
$$c_1^A(L\otimes M)=F_A\left(c_1^A(L),c_1^A(M)\right)$$         
is satisfied in $A^*(X)$ for any choice of $X$ and of the line bundles. It turns out that the pair $(A^*(k),F_A)$ constitutes a commutative formal group law of rank 1 and, as a consequence, there also exists a unique power series $\chi_{F_A}\in A^*(k)[[u]]$, known as formal inverse, such that 
$$F_A\left(u, \chi_{F_A}(u) \right)=0.$$
From the point of view of Chern classes $\chi_A$ can be thought as the algebraic counterpart of taking the dual of a line bundle: for every $L$ one has $c_1^A(L^\vee)=\chi_{F_A}\left(c_1^A(L)\right).$

\begin{remark}\label{rmk pol}
It should be stressed that, although $F_A$ and $\chi_A$ are defined as series, on a given scheme they can be actually be considered as polynomials. In fact, for every family of line bundles $(L_1,\tred,L_m)$ the product 
$c^A_1(L_1)\trecd c^A_1(L_m)$ vanishes if $m>\text{dim}_k\, X$, therefore it is possible to disegard the monomials whose total degree exceeds the dimension of $X$.
\end{remark}  

Let us now provide two basic examples of formal group laws. For any given ring $R$ the simplest example is represented by the additive formal group law, given by
 $$F_a(u,v)=u+v \quad \text{ and }\quad\chi_{F_a}(u)=-u.$$
 An OCT whose formal group law is of this type is also referred to as additive. Slightly more complex are the multiplicative formal group laws, which require the choice of a parameter $b\in R$. One then sets
$$F_m(u,v)=u+v-b\cdot uv \quad \text{ and }\quad\chi_{F_m}(u)=\frac{-u}{1-bu}.$$ 
In case $b$ happens to be an invertible element, then $(R,F_m)$ is said to be periodic. An example of this type of theory can be easily constructed by adding a grading to $K^0$, the Grothendieck ring of vector bundles. To achieve this one tensors it with $\ZZ[\beta,\beta^{-1}]$, sets $\text{deg}\, \beta={-1}$ and modifies the usual expressions for the push-fowards and the pull-backs as follows:  
  $$f^*([E]\cdot\beta^m)=[f^*(E)]\cdot \beta^m\hspace{0.05 mm}\quad \text{ and }\quad
g_*([E]\cdot\beta^m)= \sum_{i=0}^{\infty} (-1)^i [R^i g_*(\mathcal{E})]\cdot \beta^{m-d}.$$
Notice that the definition of the push-forward morphism recquires us to identify the vector bundle $E$ with its corresponding locally free sheaf $\mathcal{E}$. We will write $K^0[\beta,\beta^{-1}]$ to refer to the resulting OCT, which happens to be periodic as well as multiplicative with $b=\beta$. To make things more concrete, let us add that for any line bundle $L$ we have $c_1(L):=(1-[L^\vee]_{K^0})\beta^{-1}$.

Another fundamental example of formal group law, at the opposite side of the spectrum from the point of view of complexity, was constructed in \cite{Lazard} by Lazard. It is defined over a polynomial ring with inifinitely many variables $\Laz:=\ZZ[x_i]_{i>0}$ (now known as the Lazard ring) and it is given by 
$$F(u,v)=u+v+\sum_{i,j\geq 1}a_{i,j} u^i v^j,$$
where $a_{1,1}$ is canonically identified with $x_1$ and all other coefficients $a_{i,j}$ can be expressed as polynomials in the variables up to index $i+j-1$. The pair $(\Laz,F)$ is usually referred to as the universal formal group law because any other $(R,F_R)$ is classified by a unique ring  homomorphism  $\varPhi_{(R,F_R)}:\Laz\rightarrow R$, mapping the coefficients of $F$ to those of $F_R$. For any OCT $A^*$ we will write  $\varPhi_A$ for the morphism classifying the corresponding formal group law. As an example we can consider $\varPhi_{CH}:\Laz \rightarrow \ZZ$, which, as all homomorphisms associated to additive laws, maps all the variables to 0. On the other hand multiplicative formal group laws, as for instance $K^0[\beta,\beta^{-1}]$, are classified by morphisms which may differ from the previous ones only for the value at $x_1$, which is $-b$.

It was Quillen the first to notice the link between the behaviour of the first Chern class and formal group laws. In fact, in \cite{ElementaryQuillen} he introduced in the topological setting the notion of OCT as a functor defined over the category of differentiable manifolds, characterizing $MU^*$ as the universal such theory.  He also proved that the homomorphism classifying $F_{MU}$ is actually an isomorphism, hence showing that $MU^*(k)$ can be identified with the Lazard ring. It was in view of these results that Levine and Morel decided to extend the definition of OCTs to smooth schemes: their goal was to identify the algebro-geometric counterpart of $MU^*$. The outcome of their efforts was the construction of algebraic cobordism, for which they established the exact analogues of Quillen's results: the coincidence of the coefficient ring with $\Laz$ and the universality of $\Omega^*$ among OCTs. More specifically, they prove that for any other theory $A^*$, there exists a unique morphism of OCTs $\theta_A:\Omega^*\rightarrow A^*$, i.e. a natural transformation which is also compatible with the family of push-forwards.   

It formally follows from this result that, starting from any $(R,F_R)$, it is possible to construct an OCT which is universal among those with such formal group law. One simply has to consider $\Omega^*_{(R,F_R)}:=\Omega^*\otimes_\Laz R$, where the tensor product is taken along $\varPhi_{(R,F_R)}$. In particular this procedure can be applied to $(A^*(k), F_A)$, in which case one may ask whether or not 
$$\theta^{(A^*(k), F_A)}_A:\Omega^*_{(A^*(k), F_A)}\rightarrow A^*$$
 is an isomorphism.  By adopting this point of view Levine and Morel succeeded in characterizing $CH^*$ and $K^0[\beta,\beta^{-1}]$ as the universal theories for the additive and multiplicative periodic laws.   

There is a second aspect, together with the formal group law, which makes OCTs in general (and $\Omega^*$ in particular) more difficult to handle than the Chow ring and the Grothendieck ring of vector bundles. In these two cases one has well defined fundamental classes for all equidimensional schemes. In fact, for a $d$-dimensional subscheme $X\stackrel{i}\rightarrow Y$ one can set
\begin{eqnarray}\label{eqn fund}
[X]_{CH}=\sum_{i=1}^n m_i[X_i] \quad \text{ and } \quad 
[X]_{K^0[\beta,\beta^{-1}]}=[\mathcal{O}_X]_{K^0}\cdot \beta^{d},
\end{eqnarray} 
where $X_1,\tred,X_n$ are the irreducible components of $X$, the coefficient $m_i$ represents the length of the  local ring $\mathcal{O}_{X,X_i}$ and $[\mathcal{O}_X]_{K^0}$ is the element corresponding to the structure sheaf of $X$ under the identification of $K^0(Y)$ with $G_0(Y)$, the Grothendieck group of quasi-coherent sheaves. 

In general, in order to define fundamental classes, it is necessary to replace the notion of OCT with that of oriented Borel-Moore homology. In this context one does not require the existence of pull-backs for all morphisms, but only for l.c.i. morphisms. This has two immediate consequences, it is possible to define $\Omega_*$ on a larger category (for instance $\SCH'$) and the definition of the fundamental class can be extended from $\SM$ to l.c.i. schemes by making use of the pull-back along the structural morphism. More precisely, in $\Omega_*(X)$ one sets 
$$[X]_\Omega:=\pi^*_X(1) $$
for $1\in\Laz$,
while in $\Omega^*(Y)$ we define the fundamental class to be the push-forward $i_*([X]_\Omega)$. This definition, which is compatible with l.c.i. pull-backs, can be transferred to all other OCTs by making use of the universality of $\Omega^*$ and in this sense it agrees with the expressions in (\ref{eqn fund}). 

Another example of OCT for which it is possible to extend the general definition is connective $K$-theory (denoted $CK^*$), the universal theory associated to the multiplicative formal group law $(\ZZ[\beta],F_m)$ with $b=\beta$. In \cite[Corollary 6.4]{ConnectiveDai}, Dai and Levine proved that for any equidimensional scheme the top components of the oriented Borel-Moore homology theories associated to $CK^*$ and $K^0[\beta,\beta^{-1}]$ are isomorphic and that this isomorphism is compatible with l.c.i. pull-backs. In this way they were able to make use of the second definition in (\ref{eqn fund}) to enlarge the one inherited from $\Omega^*$. Moreover, this assignment is compatible with $\theta^{CK}_{CH}$ and $\theta^{CK}_{K^0[\beta,\beta^{-1}]}$, the morphisms of OCTs arising from the universality of connective $K$-theory. As a direct consequence of this, one has the following lemma. For a proof see \cite[Lemma 2.2]{ThomHudson}.

\begin{lemma}  \label{lem class}
Let $X\in\SCH$ be equi-dimensional and with at worst rational singularities. Let $f:Y\rightarrow X$ be a resolution of singularities of $X$ and denote by $i_X$ the embedding of $X$ into $Z\in \SM$. Then $(i_X\circ f)_*([Y]_{CK})=[X]_{CK}$ as elements of $CK^*(Z)$.  
\end{lemma}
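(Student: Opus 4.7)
The plan is to transfer the claimed equality along the morphism of oriented cohomology theories $\theta := \theta^{CK}_{K^0[\beta,\beta^{-1}]}$ provided by the universality of $CK^*$, verify it in $K^0[\beta,\beta^{-1}]$ by means of the rational singularities hypothesis, and then lift it back through the Dai--Levine isomorphism on top components. Since both $(i_X\circ f)_*[Y]_{CK}$ and $[X]_{CK}$ in $CK^*(Z)$ are obtained by applying $(i_X)_*$ to classes in the top component of $CK_*(X)$, it is enough to prove the stronger identity $f_*[Y]_{CK}=[X]_{CK}$ there.

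The morphism $\theta$ commutes with the projective push-forward $f_*$, and by construction the singular $CK$-fundamental classes are precisely those that map, under $\theta$, to the corresponding $K^0[\beta,\beta^{-1}]$-fundamental classes; in particular
$$\theta\bigl(f_*[Y]_{CK}\bigr)=f_*\bigl([\mathcal{O}_Y]_{K^0}\cdot\beta^d\bigr),\qquad \theta\bigl([X]_{CK}\bigr)=[\mathcal{O}_X]_{K^0}\cdot\beta^d,$$
where $d=\dim X=\dim Y$. I would then invoke the defining property of rational singularities: since $f$ is a resolution of $X$, one has $f_*\mathcal{O}_Y=\mathcal{O}_X$ and $R^if_*\mathcal{O}_Y=0$ for $i>0$. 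Substituting this into the $K^0[\beta,\beta^{-1}]$ push-forward formula recalled in the introduction (for the relative-dimension-zero morphism $f$) yields
$$f_*\bigl([\mathcal{O}_Y]_{K^0}\cdot\beta^d\bigr)=\sum_{i\geq 0}(-1)^i\bigl[R^if_*\mathcal{O}_Y\bigr]\cdot\beta^d=[\mathcal{O}_X]_{K^0}\cdot\beta^d,$$
so the two classes share the same image under $\theta$.

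As the Dai--Levine result realises $\theta$ as a bijection on top components of $CK_*(X)$, the equality lifts to $CK_*(X)$; applying $(i_X)_*$ then concludes in $CK^*(Z)$. The main point I expect to require care is the compatibility of the Dai--Levine identification with the projective push-forward $f_*$ — the excerpt records this compatibility only for l.c.i.\ pull-backs — but it is automatic from what has already been used: $\theta$ is a morphism of OCTs, hence commutes with every projective push-forward, and the extended singular fundamental classes were set up from the start so that this compatibility holds.
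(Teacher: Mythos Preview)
The paper does not actually prove this lemma; it simply refers to \cite[Lemma 2.2]{ThomHudson}. Your argument is the standard one and is essentially what appears in that reference: push the identity through the natural transformation $\theta$ to $K^0[\beta,\beta^{-1}]$, use the rational-singularities hypothesis ($Rf_*\mathcal{O}_Y=\mathcal{O}_X$) to verify it there, and then invoke the Dai--Levine isomorphism on the top component of the Borel--Moore theory to lift the equality back to $CK_*(X)$.

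One small clarification is worth making. When you justify the compatibility of $\theta$ with $f_*$ by saying ``$\theta$ is a morphism of OCTs, hence commutes with every projective push-forward,'' this is slightly off: OCTs live on $\SM$, and the push-forward you need is $f_*:CK_*(Y)\to CK_*(X)$ with $X$ possibly singular, i.e.\ at the level of oriented Borel--Moore homology. The compatibility is still automatic, but for the right reason: the natural transformation $CK_*\to K^0[\beta,\beta^{-1}]_*$ comes from the universality of $\Omega_*$ as an oriented Borel--Moore homology theory, and such transformations commute with all projective push-forwards by definition. With that adjustment your proof is complete.
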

  

\subsection{A symplectic projective bundle formula}

We now present an adaptation of the projective bundle formula to the special case in which the bundle is endowed with a non-degenerate skew-symmetric form. For this, let us first introduce a piece of notation which will allow us to express the statement of the orginal result in a form more suitable for our needs.

For a vector bundle $E\rightarrow X$ of rank $n$ and $l\in\NN$, define $P_l^E\in\Omega^*(X)[z_1,z_2,\dots]$ to be the following symmetric function: $$P_l^E(\mathbf{z}):=\sum_{j=0}^l (-1)^j  c_{l-j}(E)h_j(\mathbf{z}).$$
\begin{theorem}[Projective bundle formula]\label{th Proj}
For $X\in \SM$, let $\pi :E\rightarrow X$ be a vector bundle of rank $n$ and consider the $\Omega^*(X)$-algebra homorphism 
$$\Omega^*(X)[\xi]\stackrel{\varphi}\rightarrow \Omega^*(\Proj (E))$$
which maps $\xi$ to $c_1(O(1))$. Then $\varphi$ is an epimorphism and furthermore ${\rm Ker}\,\varphi=\big(P_n^E(\xi)\big)$. 
\end{theorem}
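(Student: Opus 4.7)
The plan is to derive the statement as a formal consequence of the projective bundle formula axiom that every OCT is required to satisfy. In the Levine--Morel formulation, this axiom asserts that for a rank $n$ bundle $E \to X$ the ring $\Omega^*(\Proj(E))$ is free as an $\Omega^*(X)$-module with basis $1, \xi, \xi^2, \ldots, \xi^{n-1}$, where $\xi = c_1(\mathcal{O}(1))$. Surjectivity of $\varphi$ is immediate from this, since the generators $\xi^i$ all lie in its image.

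To locate the kernel, I would invoke the very definition of the higher Chern classes in the axiomatic setting: because $\xi^n$ is an element of $\Omega^*(\Proj(E))$, the freeness provides unique coefficients $a_0, \ldots, a_{n-1} \in \Omega^*(X)$ with $\xi^n = \sum_{i=0}^{n-1} a_i \xi^i$, and the classes $c_i(E)$ are defined (with the appropriate signs) precisely so that this relation rearranges to the vanishing $P_n^E(\xi) = 0$ in $\Omega^*(\Proj(E))$. This immediately gives the inclusion $\big(P_n^E(\xi)\big) \subseteq \ker\varphi$.

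For the reverse inclusion, I would exploit the fact that $P_n^E(\xi)$ has leading coefficient $\pm 1$ and is therefore, up to a global sign, a monic polynomial in $\xi$ over $\Omega^*(X)$. Euclidean division then allows us to write any $p \in \Omega^*(X)[\xi]$ as $p = q \cdot P_n^E(\xi) + r$ with $\deg_\xi r < n$. If $p$ lies in $\ker\varphi$, so does $r$; but then the linear independence of $1, \xi, \ldots, \xi^{n-1}$ over $\Omega^*(X)$ coming from the projective bundle axiom forces $r = 0$, and hence $p \in \big(P_n^E(\xi)\big)$.

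The entire argument is purely formal once the projective bundle axiom is granted; there is no genuine obstacle beyond bookkeeping of signs in passing between the natural relation expressing $\xi^n$ in the basis and the precise form of $P_n^E$. The actual mathematical content of the theorem rests in Levine--Morel's verification that $\Omega^*$ satisfies the projective bundle axiom to begin with, which is assumed available.
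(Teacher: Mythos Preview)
Your argument is correct. The paper does not actually supply a proof of this theorem: it is stated as a reformulation of the standard projective bundle axiom (from Levine--Morel), and the only commentary offered is the remark immediately following the statement, namely that $P_n^E(\xi)$ equals $c_n$ of the rank-$(n-1)$ kernel $\mathrm{Ker}(\pi^*E \twoheadrightarrow \mathcal{O}(1))$, which therefore vanishes. So your derivation and the paper's treatment agree in spirit---both rest the result on the projective bundle axiom---but you have written out the formal steps (surjectivity from the module basis, the inclusion $(P_n^E)\subseteq\ker\varphi$ from the Grothendieck definition of Chern classes, and the reverse inclusion by Euclidean division against a monic-up-to-sign polynomial) that the paper simply takes for granted.

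The only mild difference in perspective is how the relation $P_n^E(\xi)=0$ is justified: you read it off directly from the defining relation $\sum_i(-1)^i c_i(E)\xi^{n-i}=0$ for the Chern classes, whereas the paper prefers to identify $P_n^E(\xi)$ with the top Chern class of a bundle of too small a rank via the Whitney formula applied to $0\to K\to \pi^*E\to\mathcal{O}(1)\to 0$. The latter viewpoint is what motivates the paper's subsequent symplectic refinement, but for the present statement the two are equivalent and your version is the more self-contained one.
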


The observation which suggests this reformulation is that $P_n^E$ can be viewed as the polynomial describing $c_n(\text{Ker\,}(\pi^*E\srarrow O(1)))$ in terms of $c_1(O(1))$. Since the kernel has rank $n-1$, this Chern class vanishes and a result $P_n^E(\xi)$ is mapped to 0. The projective bundle formula can be rephrased by saying that all other relations are a consequence of this one. Notice that, at the level of Chern polynomials, the observation follows by applying the Whitney formula to the short exact sequence associated to $\pi^*E\srarrow O(1)$. In the special case we are interested in, the duality provides us with more information and we can consider the sequences arising from $\pi^* E\srarrow O(1)^\perp\srarrow O(1)$. In order to make use of this extra piece of information, we need an elementary lemma on symmetric functions and a recursive formula relating polynomials with different indices. 

\begin{lemma} \label{lem symm}
In the ring of symmetric functions in variables $z_1$ and $z_2$ for any $i>0$ we have the following equalities:
\begin{eqnarray}\label{eq symm}
i)\  h_i= e_1\cdot h_{i-1}- e_2\cdot h_{i-2}\ ;\ ii)\ z_1^{i} =h_i-z_2\cdot h_{i-1}.
\end{eqnarray}
\end{lemma}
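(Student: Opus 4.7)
The plan is to prove both identities by direct expansion, aided by the generating-function identity
\begin{equation*}
H(t) \;:=\; \sum_{i\geq 0} h_i\, t^i \;=\; \frac{1}{(1-z_1 t)(1-z_2 t)},
\end{equation*}
which holds in the ring of symmetric functions in $z_1,z_2$ and packages the definitions of $h_i$ and $e_j$ simultaneously, since the denominator equals $1 - e_1 t + e_2 t^2$.

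For identity (i), I multiply both sides of the formula for $H(t)$ by $1 - e_1 t + e_2 t^2$ to obtain $1 = (1 - e_1 t + e_2 t^2) H(t)$. Reading off the coefficient of $t^i$ for $i \geq 2$ yields the three-term recursion $h_i - e_1 h_{i-1} + e_2 h_{i-2} = 0$, which is exactly the claim; the case $i = 1$ is verified directly from $h_0 = 1$, $h_1 = e_1$, together with the convention $h_{-1} = 0$. For identity (ii), I multiply $H(t)$ by $1 - z_2 t$, which cancels the second factor in the denominator and gives
\begin{equation*}
(1 - z_2 t)\, H(t) \;=\; \frac{1}{1 - z_1 t} \;=\; \sum_{i \geq 0} z_1^i\, t^i.
\end{equation*}
Extracting the coefficient of $t^i$ produces $h_i - z_2 h_{i-1} = z_1^i$.

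No step presents a genuine obstacle: the content is purely combinatorial, and either identity admits an equally short one-line verification by writing $h_k = \sum_{j=0}^{k} z_1^{j} z_2^{k-j}$ and telescoping. The only real question is how cleanly these two identities can be invoked in the rest of the subsection, where they are presumably meant to encode, respectively, the Whitney-formula input from the sequence $\pi^*E \twoheadrightarrow O(1)^\perp \twoheadrightarrow O(1)$ and the relationship between $c_1(O(1))$ and the symmetric functions in the Chern roots, both used to derive the symplectic refinement of the projective bundle formula.
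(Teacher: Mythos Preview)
Your proof is correct and, for part (i), is exactly the paper's argument: the paper simply cites ``the well known formula $e_{-t}h_t=1$'', which is precisely your generating-function identity $(1-e_1t+e_2t^2)H(t)=1$. For part (ii) there is a minor difference in packaging: the paper deduces it by induction from (i), rewriting $h_i - z_2 h_{i-1} = z_1(h_{i-1} - z_2 h_{i-2})$, whereas you multiply $H(t)$ by $(1-z_2t)$ directly; both are one-line manipulations of the same content, and your route has the small advantage of not needing (i) as input.
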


\begin{proof}
The first equality is nothing but a specialization of the well known formula $e_{-t}h_t=1$, which summarises the relations existing between the power series generated by complete and elementary symmetric functions. The second one easily follows by induction once one has observed that \textit{i)} gives
$$h_i-z_2\cdot h_{i-1}=z_1(h_{i-1}-z_2 \cdot h_{i-2})\ . \qedhere$$ 
\end{proof}

\begin{lemma}\label{lem id}
Let $E$ be a vector bundle  of rank $n$. For any $m\in\NN$ the polynomial $P_m^E(z_1,z_2)$ is given by the following recursive relation:
\begin{align}\label{eq rec}
P_m^E=c_m(E)-e_1(z_1,z_2)\cdot P_{m-1}^E -e_2(z_1,z_2)\cdot P_{m-2}^E.
\end{align}
\end{lemma}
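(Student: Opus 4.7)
The plan is to prove the identity by direct expansion of the definition of $P_m^E$ and comparison of coefficients, reducing everything to part \textit{i)} of Lemma \ref{lem symm}. Since both sides of (\ref{eq rec}) are $\Omega^*(X)$-linear combinations of the Chern classes $c_0(E),\ldots,c_m(E)$, it suffices to match, for each $k\in\{0,\ldots,m\}$, the coefficient of $c_{m-k}(E)$ on the two sides.

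First I would dispose of the boundary cases $k=0$ and $k=1$. For $k=0$ the left-hand side of (\ref{eq rec}) contributes $c_m(E)\cdot h_0 = c_m(E)$, which is exactly the isolated term on the right; and no contribution to $c_m(E)$ comes from $P_{m-1}^E$ or $P_{m-2}^E$. For $k=1$, on the left we get $-c_{m-1}(E)\cdot h_1$, while the only contribution on the right comes from $-e_1\cdot P_{m-1}^E$, which produces $-e_1\cdot c_{m-1}(E)\cdot h_0 = -c_{m-1}(E)\cdot e_1$; the match then follows from the trivial equality $h_1(z_1,z_2)=z_1+z_2=e_1(z_1,z_2)$.

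The main step is the generic case $k\geq 2$. Reading off the coefficient of $c_{m-k}(E)$ from the definition of $P^E_\bullet$, the left-hand side contributes $(-1)^k h_k$, while on the right-hand side the term appears in $-e_1\cdot P_{m-1}^E$ (with index $j=k-1$) and in $-e_2\cdot P_{m-2}^E$ (with index $j=k-2$), yielding
$$-e_1\cdot(-1)^{k-1}h_{k-1}-e_2\cdot(-1)^{k-2}h_{k-2}=(-1)^k\bigl(e_1\cdot h_{k-1}-e_2\cdot h_{k-2}\bigr).$$
By Lemma \ref{lem symm} \textit{i)} applied with $i=k$, the bracketed expression equals $h_k$, so the two coefficients coincide.

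There is no genuine obstacle here: the content of the statement is purely a symmetric-function identity, and all Chern-class data are carried along as passive coefficients. The only point requiring attention is the bookkeeping of indices in the sums defining $P_{m-1}^E$ and $P_{m-2}^E$, so that one correctly identifies which terms contribute to a fixed $c_{m-k}(E)$. Once this is done, the whole identity collapses to a single application of Lemma \ref{lem symm} \textit{i)}.
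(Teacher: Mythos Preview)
Your proof is correct and follows essentially the same approach as the paper's: both expand the definitions of $P_{m-1}^E$ and $P_{m-2}^E$, isolate the $c_m(E)$ and $c_{m-1}(E)$ terms, and reduce the remaining comparison to the identity $h_k=e_1 h_{k-1}-e_2 h_{k-2}$ from Lemma~\ref{lem symm}~\textit{i)} together with $h_1=e_1$. The only cosmetic difference is that you organise the computation by extracting the coefficient of each $c_{m-k}(E)$, whereas the paper reindexes the whole sum at once.
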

\begin{proof}
Through the use of the definition of $P_j^E$ and some manipulations, from the right hand side we obtain:
$$ c_m(E)- e_1\sum_{i=0}^{m-1} (-1)^i c_{m-1-i}(E)\cdot h_i
-e_2\sum_{i=0}^{m-2} (-1)^i c_{m-2-i}(E)\cdot h_i=  \\$$
$$=c_m(E)-c_{m-1}(E)\cdot e_1+\sum_{i=0}^{m-2} (-1)^i c_{m-2-i}(E) [e_1\cdot h_{i+1}-e_2\cdot h_i].$$
It now suffices to apply lemma \ref{lem symm}, observe that $e_1=h_1$ and combine together the three resulting summands.
\end{proof}

At this stage we can express the Chern classes appearing in the decomposition of the Chern polynomial of a bundle which has two factors dual to each other. Let us remind the reader that, following remark \ref{rmk pol}, we will interpret $\chi$ as a polynomial and not as a power series.

\begin{proposition} \label{prop geom}
Let $E$ be a vector bundle of rank $2n$ over $X\in \SM$, endowed with an everywhere nondegenerate skew-symmetric form $\langle\ ,\ \rangle$. Let $L\irarrow E$ be a line bundle, $L^\perp$ its orthogonal and set $\alpha:=c_1(L)$. Then in $\Omega^*(X)$ for every $m\in\NN$ we have
$$c_m(L^\perp/ L)=P_m^E\big( \alpha,\chi(\alpha) \big).$$
\end{proposition}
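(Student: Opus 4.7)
The plan is to use the skew form to upgrade the filtration $L\hookrightarrow L^\perp\hookrightarrow E$ to a situation to which Whitney's formula applies, and then match the resulting Chern class recursion with the one satisfied by $P_m^E$ established in Lemma \ref{lem id}.

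First I would use the nondegenerate pairing $\langle\,,\,\rangle$ to identify $E/L^\perp$ with $L^\vee$. The pairing induces an isomorphism $\varphi:E\xrightarrow{\sim} E^\vee$ by $v\mapsto\langle v,-\rangle$; under $\varphi$, the subbundle $L^\perp$ is sent isomorphically to the annihilator of $L$, namely $(E/L)^\vee\subset E^\vee$, and hence $\varphi$ descends to an isomorphism $E/L^\perp\xrightarrow{\sim} L^\vee$.

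Next I would apply the multiplicativity of the total Chern class to the two short exact sequences
$$0\to L\to L^\perp\to L^\perp/L\to 0 \quad\text{and}\quad 0\to L^\perp\to E\to L^\vee\to 0.$$
Setting $F:=L^\perp/L$, $z_1:=\alpha=c_1(L)$ and $z_2:=\chi(\alpha)=c_1(L^\vee)$, Whitney's formula gives $c(E)=(1+z_1)(1+z_2)\cdot c(F)=\bigl(1+e_1(z_1,z_2)+e_2(z_1,z_2)\bigr)c(F)$. Reading off the degree-$m$ part yields the recursion
$$c_m(F)=c_m(E)-e_1(z_1,z_2)\cdot c_{m-1}(F)-e_2(z_1,z_2)\cdot c_{m-2}(F).$$

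Finally I would conclude by induction on $m$. The base cases are immediate: $c_0(F)=1=P_0^E$ and $c_1(F)=c_1(E)-e_1(z_1,z_2)=P_1^E(z_1,z_2)$. For the inductive step, comparing the recursion above with formula (\ref{eq rec}) of Lemma \ref{lem id} shows that $c_m(F)$ and $P_m^E(z_1,z_2)$ satisfy the same recurrence with the same initial terms, forcing equality.

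There is essentially no obstacle beyond a small point that deserves care, namely the legitimacy of applying Whitney's formula (and of treating $\chi$ as a polynomial) in an arbitrary OCT. The former is built into the axioms of an oriented cohomology theory in the sense of Levine--Morel, while the latter is justified by Remark \ref{rmk pol}, since on the fixed smooth scheme $X$ only finitely many monomials in $\alpha$ and $\chi(\alpha)$ contribute. Once these points are acknowledged the argument reduces to matching two recursions.
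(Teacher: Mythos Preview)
Your proof is correct and follows essentially the same route as the paper: identify $E/L^\perp\simeq L^\vee$ via the form, apply the Whitney formula to the filtration $L\subset L^\perp\subset E$, extract the degree-$m$ recursion for $c_m(L^\perp/L)$, and match it against the recursion for $P_m^E$ from Lemma~\ref{lem id} by induction. Your added remark on why $\chi$ may be treated as a polynomial (via Remark~\ref{rmk pol}) is a welcome clarification but does not alter the strategy.
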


\begin{proof}
 Since the form is skew-symmetric one can conclude that $E/L^\perp$ is isomorphic to $L^\vee$ and that $L$ is contained in $L^\perp$. Through the use of the Whitney formula and the expansion of the resulting expression one obtains the following equalities of Chern polynomials:
$$c_t(E)=c_t( L^\perp/L)c_t(L)c_t(L^\vee)=
c_t( L^\perp/L)\Big[1+e_1\big(\alpha,\xi(\alpha)\big)\cdot t+e_2\big(\alpha,\xi(\alpha)\big)\cdot t^2\Big]. $$ 
We argue by induction. For $m=1$ it is sufficient to compare the coefficients of $t$ on both sides of the equality. For general $m$ one first uses the inductive hypothesis to substitute $c_{m-1}( L^\perp/L)$ and $c_{m-2}( L^\perp/L)$ and then applies lemma \ref{lem id}:
$$c_m(L^\perp/L)= c_m(E)- e_1\cdot P_{m-1}^E\big(\alpha, \chi(\alpha)\big)
-e_2\cdot P_{m-2}^E\big(\alpha, \chi(\alpha)\big)=P_m^E\big(\alpha, \chi(\alpha)\big). \qedhere $$ 
\end{proof}

We are finally ready to prove the special form of the projective bundle formula in the case of symplectic bundles.

\begin{proposition} \label{prop Projskew}
Let $E$ be a vector bundle of rank $2n$ over $X\in \SM$, endowed with an everywhere nondegenerate skew-symmetric form $\langle\ ,\ \rangle$. Consider the two $\Omega^*(X)$-algebra homomorphisms   
$$\varphi,\varphi':\Omega^*(X)[\xi]\rightarrow\Omega^*(\Proj (E))$$
which respectively map $\xi$ to $c_1(O(1))$ and $c_1(O(-1))$. Both $\varphi$ and $\varphi'$ are epimorphisms and furthermore 
$${\rm Ker}\,\varphi={\rm Ker }\,\varphi'=\Big(P_{2n}^E\big(\xi,\chi(\xi)\big),P_{2n-1}^E\big(\xi,\chi(\xi)\big) \Big).$$
\end{proposition}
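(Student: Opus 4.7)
The strategy is to reduce the statement to the standard projective bundle formula (Theorem~\ref{th Proj}) by producing a polynomial identity relating its single generator $P_{2n}^E(\xi)$ to the two symmetric generators $P_{2n}^E(\xi,\chi(\xi))$ and $P_{2n-1}^E(\xi,\chi(\xi))$. Surjectivity of $\varphi$ is precisely Theorem~\ref{th Proj}; surjectivity of $\varphi'$ follows from $\chi\circ\chi=\mathrm{id}$ (a property of any formal inverse), since then $c_1(O(1))=\chi(c_1(O(-1)))$ lies in the image of $\varphi'$. Set $I:=\Big(P_{2n}^E(\xi,\chi(\xi)),\,P_{2n-1}^E(\xi,\chi(\xi))\Big)$.

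For the inclusion $I\subseteq\ker\varphi\cap\ker\varphi'$, I would apply Proposition~\ref{prop geom} to the tautological sub-line bundle $O(-1)\irarrow\pi^*E$ on $\Proj(E)$ (equipped with the pulled-back form), obtaining
\[ P_m^E\Big(c_1(O(-1)),\chi\big(c_1(O(-1))\big)\Big)=c_m\big(O(-1)^\perp/O(-1)\big). \]
Since the quotient bundle has rank $2n-2$, these Chern classes vanish as soon as $m\geq 2n-1$. Combining $\chi(c_1(O(-1)))=c_1(O(1))$ with the symmetry of $P_m^E$ in its two arguments then shows that both $\varphi$ and $\varphi'$ annihilate the two generators of $I$.

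For the reverse inclusion, the key ingredient is the algebraic identity
\[ P_m^E(\xi)=P_m^E\big(\xi,\chi(\xi)\big)+\chi(\xi)\cdot P_{m-1}^E\big(\xi,\chi(\xi)\big), \]
which I would derive by substituting $\xi^j=h_j(\xi,\chi(\xi))-\chi(\xi)\,h_{j-1}(\xi,\chi(\xi))$ from Lemma~\ref{lem symm}(ii) into the definition of $P_m^E(\xi)$ and splitting the resulting sum. Taking $m=2n$ places $P_{2n}^E(\xi)$ in $I$, and together with $\ker\varphi=(P_{2n}^E(\xi))$ from Theorem~\ref{th Proj} this yields $\ker\varphi=I$. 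For $\varphi'$ I would invoke the involution $\sigma\colon\xi\mapsto\chi(\xi)$ (well defined thanks to $\chi\circ\chi=\mathrm{id}$ and the polynomial truncation of Remark~\ref{rmk pol}): since $\varphi'=\varphi\circ\sigma$, one has $\ker\varphi'=\sigma(\ker\varphi)$, and because both generators of $I$ are symmetric in $\xi$ and $\chi(\xi)$ they are fixed by $\sigma$, so $\ker\varphi'=\sigma(I)=I$.

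The only genuine computation is the algebraic identity above; everything else is a direct appeal to Theorem~\ref{th Proj}, Proposition~\ref{prop geom} and the symmetry of $P_m^E$. The principal subtlety is making precise sense of $\chi(\xi)$ inside $\Omega^*(X)[\xi]$ rather than in its completion, but this is addressed by Remark~\ref{rmk pol} together with the finite dimension of $\Proj(E)$.
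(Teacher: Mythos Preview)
Your approach is essentially the paper's: same appeal to the projective bundle formula, same application of Proposition~\ref{prop geom} to $O(-1)$, same algebraic identity derived from Lemma~\ref{lem symm}(ii), and the same idea of comparing $\varphi$ and $\varphi'$ through the substitution $\xi\mapsto\chi(\xi)$.

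The one place needing more care is the claim that $\sigma$ is an involution. After truncating $\chi$ to a polynomial via Remark~\ref{rmk pol}, the map $\sigma$ is a well-defined ring endomorphism of $\Omega^*(X)[\xi]$, but $\sigma^2(\xi)=\chi(\chi(\xi))$ is \emph{not} literally $\xi$ in the polynomial ring; hence the steps ``$\ker\varphi'=\sigma(\ker\varphi)$'' and ``generators of $I$ are fixed by $\sigma$'' do not hold on the nose. The paper sidesteps this by using only the two relations $\varphi'=\varphi\circ\sigma$ and $\varphi=\varphi'\circ\sigma$ (which together give $\varphi\circ\sigma^2=\varphi$), and then arguing directly: if $\alpha\in\ker\varphi'$ then $\sigma(\alpha)\in\ker\varphi$, hence $\sigma^2(\alpha)\in\ker\varphi$, and finally $\varphi(\alpha)=\varphi(\sigma^2(\alpha))=0$. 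Your plan is easily patched along these lines.
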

\begin{proof}
We first take $\varphi$ into consideration. In view of theorem \ref{th Proj} we only need to prove the statement involving the kernel. For this we observe that, since $h_i(\xi,0)=\xi^i$, from the definition of $P^E_j$ together with (\ref{eq symm},ii) one can easily deduce 
$$P_{2n}^E\big(\xi,0\big)=P_{2n}^E\big(\xi,\chi(\xi)\big)-\chi(\xi)\cdot P_{2n-1}^E\big(\xi,\chi(\xi)\big)$$
and therefore we have one of the inclusions. We now need to show that both polynomials belong to the kernel. For this we apply proposition \ref{prop geom} with respect to the line bundle $L:=\text{Ker\,}(\pi^*E\srarrow O(1))^\perp$. By observing that the $P^E_i$'s are symmetric and that $L\simeq O(1)^\vee$,  we obtain
$$c_i(L^\perp/L)=P_i^E(c_1(L),c_1(L^\vee))=P_i^E(c_1(L^\vee),c_1(L))=\varphi(P_{i}^E(\xi,\chi(\xi))).$$
It now suffices to recall that $L^\perp/L$ has rank $2n-2$ and hence both $c_{2n}$ and $c_{2n-1}$ vanish.

Let us now move on to the second homomorphism, which we want to relate to the first one. For this purpose we consider $\psi$, the endomorphism of $\Omega^*(X)[\xi]$ which maps $\xi$ to $\chi(\xi)$, viewed as a polynomial. As a direct consequence of the defining property of $\chi$, we have that $\varphi=\varphi'\circ \psi$ and $\varphi'=\varphi\circ \psi$. While the first equality  proves the surjectivity of $\varphi'$, the second one allows us to relate the two kernels: $\text{Ker\,} \varphi\supseteq \psi(\text{Ker\,} \varphi')$.

    We are now ready to prove that the two kernels actually coincide. An easy calculation shows that both generators of $\text{Ker\,} \varphi$ are mapped to 0 by $\varphi'$ and hence $\text{Ker\,} \varphi'\supseteq \text{Ker\,} \varphi$. In fact, one only has to observe that, by definition, both polynomials are symmetric in the two entries and that $\chi(\chi(\xi))=\xi$, if one disregards the terms of degree higher than $\text{dim}_k X$. It should be noticed that, in view of the relation obtained earlier, we also proved that $\text{Ker\,}\varphi\supseteq\psi(\text{Ker\,}\varphi)$. We will now use both these observations to prove the missing inclusion. If $\alpha\in \text{Ker\,} \varphi'$, then both $\psi(\alpha)$ and $\psi^2(\alpha)$ belong to $\text{Ker\,} \varphi$ and this completes the proof, since $\varphi(\alpha)=\varphi(\psi^2(\alpha))$.  
\end{proof}

\section{Nested flag bundles and their fundamental classes}

The main goal of this section is to describe in detail the relationship between the algebraic cobordism rings of flag bundles of different sizes. We first illustrate how to deal with full flag bundles and then we continue by presenting the symplectic case, highlighting the differences between the two situations. 

\subsection{Full flag bundles}
In order to perform the actual computation of the fundamental classes of the full flag bundles, we need to be able to describe how the Schubert varieties of different ambient spaces are related. Before we recall the precise definition of this family of subvarieties, let us introduce some notation related to their indexing set. 

\paragraph{The symmetric group:}
 Let $S_n$ denote the $n$-th symmetric group and $s_i$ the elementary transposition $(i\ i+1)$. The length $l(w)$ of a permutation is defined as the minimal number of elementary transpositions needed in order to express $w$. In $S_n$ the maximum of the length function is achieved by  $w_0^{(n)}=(1\,n)(2\,n-1)\trecd (\lceil n/2\rceil\lceil (n+1)/2\rceil)$, which is usually referred to as the longest permutation.
 In order to relate symmetric groups of different size we consider the inclusion $e_{n-1}:S_{n-1}\irarrow S_n$  which maps $w$ to the permutation defined by setting $e_{n-1}(w)(n)=1$ and $e_{n-1}(w)(i)=w(i)+1$. By composing these inclusions we obtain an embedding $e_m$ from any smaller symmetric group $S_m$ into $S_n$. Notice that this embedding maps $w_0^{(m)}$ to $w_0^{(n)}$. Finally, we will write $\nu_m$ to denote $e_m(id)$.


\subsubsection*{Some identifications between Schubert varieties}
Let $V_n\rightarrow X$ be a vector bundle of rank $n$ over a scheme $X\in \SM$. Denote by $\flag V_n\stackrel{\pi_n}\longrightarrow X$ the full flag bundle associated to $V_n$, by $U\unddot^{(n)}=(0=U_0^{(n)}\subset U_1^{(n)}\subset \tred \subset U_n^{(n)}=\pi_n^*V_n)$ the universal flag of subbundles of $V_n$ and by $M_i^{(n)}$ the line bundles $U_{n+1-i}^{(n)}/U_{n-i}^{(n)}$ obtained via the filtration. Let us recall that $\flag V_n$ is characterised by a universal property, i.e. for every choice of a morphism $f:Y\rightarrow X$ and of a flag of subbundles $F\unddot$ of $f^* V_n$ there exists a unique $\widetilde{f}:Y\rightarrow \flag V_n$ such that $\pi_n\circ \widetilde{f}= f$ and $\widetilde{f}^* U\unddot^{(n)}=F\unddot$. 
It is woth recalling that $\flag V_n$ can be constructed explicitly as an iterated $\Proj^1$-bundle: one first considers $\Proj(V_n)\stackrel{\pi}\longrightarrow X$, which describes the hyperplane bundle of $V_n$, and then the construction is repeated with $\text{Ker}\,(\pi^*V_n \srarrow O(1))$ over $\Proj(V_n)$. The universal flag is assembled together by pulling back  the different kernels to the last projectivisation.  

Let us now recall the definition of 
 Schubert varieties. For this one requires the choice of a full flag, so let us fix  $W\unddot=(0=W_0\subset W_1\subset\tred \subset W_n=V)$ and set $L_i:=W_i/W_{i-1}$. This family of varieties is indexed by elements of $S_n$ and each of its elements is defined as 
\begin{align} \label{def Schubert}
\Omega^{(n)}_w:=\left\{x\in \flag V_n \ |\ {\rm dim}_k\left(\pi_n^*W_i(x)\cap U^{(n)}_j(x)\right)\geq r_w(i,j) \ \forall i,j \right\}\hspace{0.05 mm},
\end{align}
where $r_w(i,j):=|\{l > n-j \ | \ w(l)\leq i \}|$. A key observation, that will play an important role in our discussion, is that some of the elements of this family are isomorphic to full flag bundles of vector bundles of smaller rank. In order to apply inductive methods, we now want to describe explicitly how the different families of Schubert varieties are related. For this we set $V_m:=V_n/W_{n-m}$ and observe that, for each of these bundles, $W\unddot$ defines a full flag $W\unddot^{(m)}=(0\subset W_{n-m+1}/W_{n-m}\subset \trecd \subset V_n/W_{n-m}=V_m)$. As a consequence one has varieties $\{\Omega^{(m)}_{w}\}_{w\in S_m}$ lying inside of each $\flag V_m$. Furthermore, the remaining part of the flag $W\unddot$ can be used (once it is pulled-back to $\flag V_m$) to prolongue the universal flag $U\unddot^{(m)}$ and obtain, via the universal property of $\flag V_n$, a morphism $\iota_{m}:\flag V_m \rightarrow \flag V_n$. 

Let us provide some more details by looking at the case $m=n-1$. The universal flag $U^{(n-1)}\unddot$  of subbundles of $V_{n-1}$ induces a full flag of $\pi_{n-1}^*V_n$ in which the term representing the line bundle is $\pi^*_{n-1}W_1$ and therefore the universal property ensures that $\iota_{n-1}^* U_1^{(n)}=\pi_{n-1}^*W_1$. Similarly, for $1\leq j \leq n-m$ one has $\iota_m^*U_j^{(n)}=\pi^*_{m}W_j$ and as a consequence one has the following equalities: 
\begin{align} \label{eq pullbackA}
i) \ \iota_m^*M_j^{(n)}=M_j^{(m)} \quad \text{ for }\  1\leq j \leq m;
\quad ii) \ \iota_m^*M_{j}^{(n)}=\pi^*_m L_{n+1-j}  \quad \text{ for }\   m+1\leq j \leq n.
\end{align}

We now want to identify which Schubert varieties can also be viewed as flag bundles. By doing this we will in particular show that $\flag V_1\simeq \Omega_{w_0}^{(n)}$ and it is also isomorphic to the base scheme $X$.

\begin{lemma}\label{lemma Schubert flag}
For every $m < n$ the flag bundle $\flag V_m$ and $\Omega_{\nu_m}^{(n)}$ are isomorphic as schemes over $\flag V_n$.  
\end{lemma}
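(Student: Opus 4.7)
The plan is to exhibit the isomorphism in two stages: first, identify $\flag V_m$ with the closed subscheme $Z\subset \flag V_n$ defined by the conditions $U_j^{(n)}=\pi_n^*W_j$ for $1\leq j\leq n-m$; then, verify that $Z$ coincides with $\Omega_{\nu_m}^{(n)}$.

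For the first stage, the pullback relations in (\ref{eq pullbackA}) force $\iota_m$ to factor through $Z$. Conversely, over $Z$ the subbundles $U_{n-m+k}^{(n)}$ contain $\pi_n^*W_{n-m}$ for every $k=1,\ldots,m$, so the quotients $U_{n-m+k}^{(n)}/\pi_n^*W_{n-m}$ form a full flag of subbundles of the pull-back of $V_m=V_n/W_{n-m}$. The universal property of $\flag V_m$ then produces a morphism $Z\rightarrow \flag V_m$ inverse to $\iota_m$, and fibrewise over $X$ this recovers the canonical identification between the partial flag variety $\{W_{n-m}\subset F_{n-m+1}\subset\cdots\subset F_{n-1}\subset V_n(x)\}$ and the full flag variety of $V_n(x)/W_{n-m}(x)$. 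Hence $\iota_m$ realises $\flag V_m$ as a closed subscheme of $\flag V_n$ equal to $Z$.

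For the second stage I would use the rank conditions defining $\Omega_{\nu_m}^{(n)}$. Reading $\nu_m$ off in one-line notation as $(n-m+1,\ldots,n,n-m,\ldots,1)$, a direct calculation gives $r_{\nu_m}(j,j)=j$ for every $j\leq n-m$: the positions $l\in\{n-j+1,\ldots,n\}$ all lie beyond $m$, and $\nu_m$ maps exactly the last $j$ of them into $\{1,\ldots,j\}$. This forces $U_j^{(n)}=\pi_n^*W_j$ on $\Omega_{\nu_m}^{(n)}$, giving the inclusion $\Omega_{\nu_m}^{(n)}\subseteq Z$. For the reverse inclusion I would check that every inequality $\dim(\pi_n^*W_i\cap U_j^{(n)})\geq r_{\nu_m}(i,j)$ holds at each point of $Z$: when $j\leq n-m$ both sides equal $\min(i,j)$; when $j>n-m$ the containment $\pi_n^*W_{n-m}\subseteq U_j^{(n)}$ reduces the condition to one on the quotient flag bundle $\flag V_m$ where the residual permutation is the identity and hence imposes no restriction.

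The principal obstacle is the scheme-theoretic identification $Z=\Omega_{\nu_m}^{(n)}$, since a priori the rank equations in (\ref{def Schubert}) might impose nilpotents absent from $Z$. I would resolve this by a dimension count combined with the reducedness of $\Omega_{\nu_m}^{(n)}$ as the closure of its Schubert cell. The length of $\nu_m$ equals $m(n-m)+\binom{n-m}{2}$, so $\Omega_{\nu_m}^{(n)}$ has dimension $\dim X+\binom{n}{2}-m(n-m)-\binom{n-m}{2}=\dim X+\binom{m}{2}=\dim Z$. Since $Z$ is a flag bundle, hence smooth and irreducible, and is set-theoretically contained in the reduced irreducible $\Omega_{\nu_m}^{(n)}$ of the same dimension, the two must agree as closed subschemes of $\flag V_n$, yielding the claimed isomorphism $\flag V_m\simeq \Omega_{\nu_m}^{(n)}$ over $\flag V_n$.
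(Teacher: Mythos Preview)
Your argument is correct and the first stage matches the paper's approach closely: both you and the paper construct mutually inverse morphisms between $\flag V_m$ and the relevant subscheme via the two universal properties. The difference lies in your second stage. The paper does not pass through an auxiliary locus $Z$ and then compare it to $\Omega_{\nu_m}^{(n)}$ by dimension and reducedness; instead it works directly with the scheme $\Omega_{\nu_m}^{(n)}$ as a fibre product (the zero locus of the appropriate sections), checks that $\iota_m$ factors through that fibre product, builds the reverse map from the universal property of $\flag V_m$, and then verifies that the two compositions are identities by uniqueness in each universal property. Since both schemes sit inside $\flag V_n$ and the closed immersion $\Omega_{\nu_m}^{(n)}\hookrightarrow\flag V_n$ is a monomorphism, this yields the scheme isomorphism without ever invoking reducedness.

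What your route costs is precisely that reducedness input: the paper defines $\Omega_w^{(n)}$ as an intersection of degeneracy loci (vanishing of exterior powers of the maps $f_{ij}$), and such a scheme is not a priori reduced, nor is its coincidence with the closure of the Schubert cell automatic. That fact is true, but it is an external result (essentially the normality of Schubert varieties, due to Ramanathan and others) which in the paper only appears later, in the symplectic setting, as part of Proposition~\ref{prop resolution}. The paper's direct argument is therefore more self-contained. On the other hand, your approach makes the combinatorics of $\nu_m$ completely explicit (the one-line form, the length computation, the verification of every rank condition), which the paper leaves implicit; and once one is willing to import reducedness, your dimension count gives an independent sanity check on the identification. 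Note also that, since you establish both set-theoretic inclusions, the dimension count is in fact redundant---set-theoretic equality together with $Z$ smooth and $\Omega_{\nu_m}^{(n)}$ reduced already suffices.
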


\begin{proof}
First of all let us recall that the scheme structure on Schubert varieties is given by defining them as the intersection of a certain family of zero schemes, each of which arises from one of the conditions in (\ref{def Schubert}). To be more precise, one translates the requirement on the dimension of the intersection into a condition on the rank of the morphism $f_{ij}:\pi_n^* W_i\rightarrow \pi^*_n V_n/ U^{(n)}_j$. The zero scheme is then obtained as the locus in which an appropriate exterior power of $f_{ij}$ vanishes.

 The important point for us is that it is possible to characterise $\Omega_{\nu_m}\irarrow \flag V_n$  as the fibre product of a diagram involving $\flag V_n$ and two bundle sections. One can check that if $\Omega_{\nu_m}$ is replaced with $\flag V_m \stackrel{\iota_m}\rightarrow \flag V_n$, then the same diagram still commutes and therefore by the universality we obtain a morphism $\flag V_m\rightarrow\Omega_{\nu_m}^{(n)} $. Conversely, it is a consequence of the defining conditions of $\Omega_{\nu_m}^{(n)}$ that the restriction of the universal flag $U\unddot^{(n)}$ defines a full flag of the pull-back of $V_m$, hence we also obtain a morphism in the opposite direction. To finish the proof it now suffices to verify that the two compositions fit in the diagrams describing the universal properties of $\Omega_{\nu_m}$ and $\flag V_m$. It then follows, in view of the uniqueness, that they have to be the identity.
\end{proof}

It follows from this lemma that as a closed subscheme $\flag V_m\stackrel{\iota_m}\irarrow \flag V_n$ is defined by the condition that the first $n-m$ subbundles have to coincide with those of the given flag $W\unddot$. This is just a particular instance of a more general phenomenon relating the two different families of Schubert varieties.
\begin{proposition}\label{prop induction geometry}
For every $w\in S_m$ the isomorphism of lemma \ref{lemma Schubert flag} induces the following isomorphism
$$\Omega^{(m)}_w\simeq \Omega^{(n)}_{e_m(w)}$$
of schemes over $\flag V_n$.
\end{proposition}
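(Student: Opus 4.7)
The plan is to run the same kind of argument used for Lemma \ref{lemma Schubert flag}: use the universal property of $\flag V_n$ to identify, scheme-theoretically, the preimage $\iota_m^{-1}(\Omega^{(n)}_{e_m(w)})$ with $\Omega^{(m)}_w$ inside $\flag V_m\simeq \Omega^{(n)}_{\nu_m}$. First I would observe that $\Omega^{(n)}_{e_m(w)}\subset \Omega^{(n)}_{\nu_m}$, since by the rank-function description of the Bruhat order one has $r_{e_m(w)}\geq r_{\nu_m}$ pointwise, so it suffices to work inside $\flag V_m$ and show that the incidence conditions defining $\Omega^{(n)}_{e_m(w)}$, after pull-back along $\iota_m$, cut out exactly $\Omega^{(m)}_w$.

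Next I would analyse the restrictions $\iota_m^\ast U^{(n)}_j$. By the universal property (used already in proving Lemma \ref{lemma Schubert flag}), for $j\leq n-m$ one has $\iota_m^\ast U^{(n)}_j=\pi_m^\ast W_j$, while for $j=n-m+j'$ with $j'\geq 1$ there is a short exact sequence $0\to \pi_m^\ast W_{n-m}\to \iota_m^\ast U^{(n)}_{n-m+j'}\to U^{(m)}_{j'}\to 0$. Combining this with $\iota_m^\ast \pi_n^\ast W_i=\pi_m^\ast W_i$, one gets: if $i\leq n-m$ or $j\leq n-m$ the intersection $\iota_m^\ast(\pi_n^\ast W_i\cap U^{(n)}_j)$ has the ``free'' dimension $\min(i,j)$ or $i$; whereas for $i=n-m+i'$, $j=n-m+j'$ it has dimension $(n-m)+\dim\bigl(\pi_m^\ast W^{(m)}_{i'}\cap U^{(m)}_{j'}\bigr)$.

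The combinatorial input I need is a direct check of the formula
\[
r_{e_m(w)}(n-m+i',\,n-m+j')\;=\;(n-m)+r_w(i',j'),
\]
and that, in the boundary range $i\leq n-m$ or $j\leq n-m$, $r_{e_m(w)}(i,j)$ equals the dimension of the intersection computed above, so that the corresponding incidence condition is automatic on $\flag V_m$. This follows from the explicit description of $e_m$ obtained by iterating $e_{n-1}$: $e_m(w)(l)=w(l)+(n-m)$ for $l\leq m$ and $e_m(w)(l)=n+1-l$ for $l>m$. Splitting the set $\{l>n-j:e_m(w)(l)\leq i\}$ according to $l>m$ or $l\leq m$ gives both the $(n-m)$-shift in the interior range and the identities that render the boundary conditions tautological.

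Combining these two ingredients, the zero-scheme definition of $\Omega^{(n)}_{e_m(w)}$ (as the locus where certain exterior powers of the maps $\pi_n^\ast W_i\to \pi_n^\ast V_n/U^{(n)}_j$ vanish with prescribed ranks) pulls back under $\iota_m$ to the analogous zero-scheme definition of $\Omega^{(m)}_w$ inside $\flag V_m$, the remaining conditions being cut out by identically vanishing sections. Hence the iso of Lemma \ref{lemma Schubert flag} restricts to an iso $\Omega^{(m)}_w\simeq \Omega^{(n)}_{e_m(w)}$ of schemes over $\flag V_n$. The main delicate step is the combinatorial identity for $r_{e_m(w)}$; once it is in place, the rest is a formal consequence of the universal property and the case analysis on intersection dimensions.
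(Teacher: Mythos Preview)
Your proposal is correct and follows essentially the same strategy as the paper: show that the rank conditions defining $\Omega^{(n)}_{e_m(w)}$, once restricted to $\Omega^{(n)}_{\nu_m}\simeq\flag V_m$, reduce exactly to those defining $\Omega^{(m)}_w$. The only organizational difference is that the paper reduces to the step $m=n-1$ and argues by induction (subtracting one from the rank conditions after quotienting by $U_1^{(n)}=\pi_n^*W_1$), whereas you handle general $m$ in one shot by writing down the explicit formula for $e_m(w)$ and verifying the identity $r_{e_m(w)}(n-m+i',\,n-m+j')=(n-m)+r_w(i',j')$ together with the automatic equality $r_{e_m(w)}(i,j)=\min(i,j)$ in the boundary range. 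Your direct computation is a fine substitute for the inductive step; the underlying content is the same.
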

\begin{proof}
It is clear that it suffices to verify the claim for $m=n-1$, since the general statement easily follows by induction. 
By definition of $e_{n-1}$ for all $w$ we have $e_{n-1}(w)(n)=1$, therefore the Schubert varieties of the form $\Omega^{(n)}_{e_{n-1}(w)}$ have to satisfy the defining equations of $\Omega_{\nu_{n-1}}^{(n)}\simeq \flag V_{n-1}$. After imposing the equality of $U_1^{(n)}$ with $\pi_n^*W_1$, one has to rewrite the remaining conditions in terms of $U^{(n)}\unddot/U_1^{(n)}$ and $ W\unddot^{(n-1)}$ and these will give rise to a subscheme of $\flag V_{n-1}$. Notice that the restatements only involve the restriction of $e_{n-1}(w)$ to $\{1,\tred,n-1\}$ and that it is necessary to subtract 1 to all rank conditions, since $U_1^{(n)}$ has been removed. As a consequence one sees that the new equations are precisely the ones defining $\Omega_{w}^{(n-1)}$.
\end{proof}



\subsubsection*{The fundamental classes of $\flag V_m$}

Let us now illustrate the algebraic side of the picture determined by the inclusions $\iota_m$. To achieve this, we first recall the description of the ring structure of  $\Omega^*(\flag V_m)$.

\begin{proposition}[\protect{\cite[Theorem 2.6]{SchubertHornbostel}}] \label{prop Flag ring}
Let $E$ be a vector bundle of rank $n$ over $X\in\SM$ and let $J$ be the ideal of $\Omega^*(X)[t_1,\trecd,t_n]$ generated by the elements $c_i(E)-e_i(\textbf{t})$, where, for $1\leq i\leq n$, $c_i(E)$ is the $i$-th Chern class of $E$ and $e_i(\textbf{t})$ is the $i$-th elementary symmetric function. Then   
$$ \Omega^*(X)[\textbf{t}]/J\hspace{0.05 mm}\simeq\Omega^*(\flag E),$$
where the isomorphism maps the $t_i$'s to the Chern roots of $\pi_n^*E$. 
\end{proposition}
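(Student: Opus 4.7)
The plan is to proceed by induction on the rank $n$, exploiting the description of $\flag E$ as an iterated projective bundle recalled at the beginning of this subsection. For $n=1$ the scheme $\flag E$ coincides with $X$ and $E$ is itself a line bundle, so the single relation $c_1(E)=t_1$ does the job.

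For the inductive step I would let $p:\Proj(E)\to X$ denote the projection and set $S:=\ker(p^*E\srarrow O(1))$, which has rank $n-1$. The universal property of $\flag E$ identifies it canonically with $\flag S$ viewed as a scheme over $\Proj(E)$, so the projective bundle formula (Theorem \ref{th Proj}) combined with the inductive hypothesis applied to $S$ produces a presentation
$$\Omega^*(\flag E)\simeq \Omega^*(X)[t_1,\ldots,t_{n-1},t_n]\big/\widetilde J,$$
where $t_n:=c_1(O(1))$ and $\widetilde J$ is generated by $P_n^E(t_n)$ together with the $n-1$ relations $c_i(S)-e_i(t_1,\ldots,t_{n-1})$ for $1\le i\le n-1$.

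The heart of the argument is then to verify the equality of ideals $\widetilde J=J$ inside $\Omega^*(X)[t_1,\ldots,t_n]$. The bridge between the two presentations is the Whitney sum formula applied to $0\to S\to p^*E\to O(1)\to 0$, which holds in every OCT independently of the formal group law and reads $c_i(E)=c_i(S)+t_n\,c_{i-1}(S)$. Starting from the generators of $\widetilde J$, substituting $c_i(S)=e_i(t_1,\ldots,t_{n-1})$ and using the classical identity $e_i(t_1,\ldots,t_n)=e_i(t_1,\ldots,t_{n-1})+t_n\,e_{i-1}(t_1,\ldots,t_{n-1})$ turns each $c_i(E)$ into $e_i(t_1,\ldots,t_n)$; the top case $i=n$ needs only the vanishing $c_n(S)=0$. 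In the opposite direction the relations defining $J$ let one recognise $P_n^E(t_n)$ as $(-1)^n\prod_{i=1}^n(t_n-t_i)$, which vanishes because of the factor $t_n-t_n$, and an analogous rearrangement recovers the $c_i(S)-e_i(t_1,\ldots,t_{n-1})$'s by solving the Whitney identities recursively for $c_i(S)$ as polynomials in the $c_j(E)$ and $t_n$.

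The main obstacle is really only this ideal-level bookkeeping, i.e.\ checking on the nose that the two sets of generators cut out the same ideal. No serious complication arises from working in $\Omega^*$ rather than in $CH^*$: neither the projective bundle formula nor the Whitney formula depends on the formal group law, so the proof closely mirrors its classical counterpart.
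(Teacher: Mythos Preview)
The paper does not give its own proof of this proposition: it is quoted verbatim from \cite[Theorem 2.6]{SchubertHornbostel} without argument. So there is nothing in the present paper to compare your proposal against.

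That said, your proof is correct and is essentially the standard one. One small point worth making precise: when you write ``$c_i(S)-e_i(t_1,\ldots,t_{n-1})$'' as a generator of an ideal in $\Omega^*(X)[t_1,\ldots,t_n]$, you are implicitly choosing the lift $c_i(S)=P_i^E(t_n)=\sum_{j=0}^i(-1)^j c_{i-j}(E)\,t_n^{\,j}$, which is exactly what the Whitney recursion $c_i(S)=c_i(E)-t_n\,c_{i-1}(S)$ produces. With that convention in place, the ideal comparison reduces to the generating-function identity
\[
\prod_{i=1}^{n-1}(1+t_i u)=\Big(\sum_k e_k(t_1,\ldots,t_n)\,u^k\Big)\Big(\sum_{j\ge 0}(-1)^j t_n^{\,j} u^j\Big),
\]
so that modulo $J$ one has $P_i^E(t_n)\equiv e_i(t_1,\ldots,t_{n-1})$ for $i\le n-1$ and $P_n^E(t_n)\equiv \prod_{i=1}^n(t_i-t_n)=0$, while modulo $\widetilde J$ the Whitney relation gives $c_i(E)\equiv e_i(t_1,\ldots,t_n)$ for all $i$. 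This is exactly what you outlined; I only spell out the lift because it is the one place a reader might pause.
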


Let us now focus on the morphisms associated to the inclusions. It is easy to see that the pullback maps $\iota_m^*:\Omega^*(\flag V_n)\rightarrow \Omega^*(\flag V_m)$ are determined by (\ref{eq pullbackA}). More explicitly, if we set $c_1(M_j^{(m)})=x_j^{(m)}$ and $c_1(\pi_m^* L_j)=y_j$, we get 
$$i)\ \iota^*_m\big(x_j^{(n)}\big)=x_j^{(m)} \ \ \text{for } \ 1\leq j \leq m; 
\ ii)\ \iota^*_m\big(x_j^{(n)}\big)=y_{n+1-j}\ \ \text{for }\ m+1\leq j \leq n.$$
Notice that, in view of \textit{i)}, we can drop the superscript from $x_i$'s.

On the other hand, since the pullbacks $\iota^*_m$ are surjective, the description of the push-forward maps $\iota_{m*}$ is reduced to the identification of the image of the identity. For this we need the following lemma, which, through an inductive argument, will allow us to deal with the general case.
\begin{lemma}\label{lem push}
Let $\iota_j^{j+1}:\flag V_j\irarrow \flag V_{j+1}$ be the inclusion arising from the identification of $\flag V_j$ with $\Omega^{(j+1)}_{\nu_j}$. Then 
$$(\iota_j^{j+1})_*(1_{\flag V_j})=\prod_{i=1}^j F\Big(x_i,\chi(y_{n-j})\Big).$$
\end{lemma}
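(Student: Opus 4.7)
The plan is to realise $\flag V_j$ inside $\flag V_{j+1}$ as the scheme-theoretic zero locus of a regular section of a rank-$j$ vector bundle, and then to compute the associated top Chern class via the splitting principle.

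First, lemma~\ref{lemma Schubert flag} together with the description of $\iota_m$ given immediately after it identifies $\flag V_j$, viewed inside $\flag V_{j+1}$ via $\iota_j^{j+1}$, with the locus where $U_1^{(j+1)}=\pi_{j+1}^*L_{n-j}$ as sub-line-bundles of $\pi_{j+1}^*V_{j+1}$. Since both terms are line bundles, this equality is equivalent to the vanishing of the composite morphism
$$\pi_{j+1}^*L_{n-j}\irarrow \pi_{j+1}^*V_{j+1}\srarrow \pi_{j+1}^*V_{j+1}/U_1^{(j+1)},$$
i.e.\ to the vanishing of a global section $s$ of the rank-$j$ bundle
$$E:=(\pi_{j+1}^*L_{n-j})^\vee\otimes \bigl(\pi_{j+1}^*V_{j+1}/U_1^{(j+1)}\bigr).$$

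Next I would verify that $s$ is a regular section by a dimension count: $\mathrm{codim}_{\flag V_{j+1}}\flag V_j=\binom{j+1}{2}-\binom{j}{2}=j=\mathrm{rk}\,E$, so $\flag V_j$ is the l.c.i.\ zero locus of $s$. Invoking the standard fact in any oriented Borel--Moore homology theory --- and in particular in $\Omega^*$ --- that the pushforward of the fundamental class of such a regular zero locus equals the top Chern class of the ambient bundle, I would conclude
$$(\iota_j^{j+1})_*(1_{\flag V_j})=c_j(E)\quad\text{in}\quad \Omega^*(\flag V_{j+1}).$$
Finally, the universal flag equips $\pi_{j+1}^*V_{j+1}/U_1^{(j+1)}$ with a filtration whose successive line bundle quotients, suitably reordered, are $M_1^{(j+1)},\dots,M_j^{(j+1)}$ with Chern roots $x_1,\dots,x_j$. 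Tensoring with $(\pi_{j+1}^*L_{n-j})^\vee$ and applying $c_1(A\otimes B)=F(c_1(A),c_1(B))$, $c_1(B^\vee)=\chi(c_1(B))$, together with the symmetry of $F$, one arrives at
$$c_j(E)=\prod_{i=1}^{j} c_1\bigl((\pi_{j+1}^*L_{n-j})^\vee\otimes M_i^{(j+1)}\bigr)=\prod_{i=1}^{j}F\bigl(x_i,\chi(y_{n-j})\bigr),$$
which is the claimed formula.

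The main obstacle I anticipate is precisely the identification $(\iota_j^{j+1})_*(1_{\flag V_j})=c_j(E)$. In $\Omega^*$ this equivalence between the fundamental class of a regular zero scheme and the top Chern class of the ambient bundle must be handled with some care: it ultimately rests on the transversality of $s$ relative to the zero section of $E$ and on the compatibility of fundamental classes with l.c.i.\ pullbacks. A cleaner alternative would be to factor $\iota_j^{j+1}$ through the first-step projection $\flag V_{j+1}\to \Proj(V_{j+1})$, exhibit $\flag V_j\irarrow \flag V_{j+1}$ as the transverse base change of the section $X\to \Proj(V_{j+1})$ classifying $L_{n-j}$, and reduce the computation to the (transparent) case of a codimension-$j$ section on a projective bundle.
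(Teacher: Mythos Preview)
Your proposal is correct and follows essentially the same route as the paper: identify $\flag V_j$ with the zero scheme of the section of $(\pi_{j+1}^*W_1^{(j+1)})^\vee\otimes(\pi_{j+1}^*V_{j+1}/U_1^{(j+1)})$ (note $W_1^{(j+1)}=L_{n-j}$), observe that its codimension equals the rank of this bundle, invoke the ``fundamental class $=$ top Chern class'' principle, and finish with a Chern-root computation. The obstacle you flag is exactly the one the paper acknowledges; it dispatches it by citing an external result (\cite[Lemma~2.1]{ThomHudson}) rather than arguing via the factorisation through $\Proj(V_{j+1})$ that you suggest as an alternative.
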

\begin{proof}
In view of lemma  \ref{lemma Schubert flag} one can identify $\flag V_j$ with the zero scheme of the section arising from the map $\pi_{j+1}^*W_1^{(j+1)}\rightarrow \pi^*_{j+1} V_{j+1}/U_1^{(j+1)}$. Since $\flag V_j$ is a smooth scheme and its codimension in $\flag V_{j+1}$ coincides with the rank of $(\pi^*_{j+1}W_1^{(j+1)})^\vee\otimes (\pi^*_{j+1}V_{j+1}/U_1^{(j+1)})$, its fundamental class in $\Omega^*(\flag V_{j+1})$ can be computed as the top Chern class of this vector bundle (see \cite[Lemma 2.1]{ThomHudson}). The right hand side of the statement is then obtained through a computation with Chern roots.
\end{proof}

We are now ready to prove a formula for the fundamental classes of the Schubert varieties~$\Omega^{(n)}_{\nu_m}$.
 
\begin{theorem} \label{thm flags class}
Let $V_n$ be a vector bundle of rank $n$ over $X\in \SM$ and $W\unddot$ a full flag of subbundles. Fix a positive integer $m\leq n$ and set $V_m:=V_n/W_{n-m}$. As an element of $\Omega^*(\flag V_n)$ the fundamental class of the Schubert variety $\Omega^{(n)}_{\nu_m}\simeq\flag V_m$ is given by
$$[\flag V_m]_{\Omega}=\prod_{\substack{i+j\leq n\\1\leq j\leq n-m}} F\Big(x_i, \chi(y_j)\Big),$$
where  $x_i:=c_1\big(U^{(n)}_{n+1-i}/U^{(n)}_{n-i}\big)$, $y_i:=c_1\big(W_i/W_{i-1}\big)$ and $U\unddot^{(n)}$ is the universal flag of subbundles of $V_n$.
Moreover, using the identification of proposition \ref{prop Flag ring}, this formula can be used to explicitly describe the push-forward morphism $\iota_{m*}$. One~has
\begin{align*}
\Omega^*(\flag V_m)&\stackrel{\iota_{m*}}\longrightarrow \Omega^*(\flag V_n)\\
P\quad &\mapsto P\cdot [\flag V_m]_{\Omega}
\end{align*}
where $P(x_1,\tred,x_m)$ is a polynomial with coefficients in $\Omega^*(X)$.
\end{theorem}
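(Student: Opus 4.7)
The plan is to prove both assertions simultaneously by an induction that walks up the chain of one-step inclusions $\iota_k^{k+1}\colon \flag V_k\irarrow \flag V_{k+1}$ for $k=m,m+1,\ldots,n-1$, whose composite is $\iota_m$. The core tool is Lemma 3.4, which computes $(\iota_k^{k+1})_*(1)=\prod_{i=1}^{k} F(x_i^{(k+1)},\chi(y_{n-k}))$ inside $\Omega^*(\flag V_{k+1})$, together with the projection formula and the pull-back rules (i)--(ii) recorded just before Lemma 3.4.

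I would proceed step by step, setting $A_k:=(\iota_{k-1}^k\circ\cdots\circ\iota_m^{m+1})_*(1)\in\Omega^*(\flag V_k)$ with $A_m=1$. At each stage I would lift $A_k$ to a class $\widetilde{A}_k\in\Omega^*(\flag V_{k+1})$ simply by replacing $x_i^{(k)}$ by $x_i^{(k+1)}$ (and keeping the $y_j$'s, which come from the base); by (i), $(\iota_k^{k+1})^*\widetilde{A}_k=A_k$. The projection formula then gives
\begin{equation*}
A_{k+1}=(\iota_k^{k+1})_*(A_k)=\widetilde{A}_k\cdot (\iota_k^{k+1})_*(1)=\widetilde{A}_k\cdot \prod_{i=1}^{k} F\bigl(x_i^{(k+1)},\chi(y_{n-k})\bigr).
\end{equation*}
Iterating from $k=m$ to $k=n-1$ and repeatedly lifting the product by bumping the superscript $(k)\mapsto(k+1)$, I obtain
\begin{equation*}
\iota_{m*}(1)=A_n=\prod_{k=m}^{n-1}\prod_{i=1}^{k} F\bigl(x_i,\chi(y_{n-k})\bigr),
\end{equation*}
where now $x_i=x_i^{(n)}$. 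The substitution $j=n-k$ reindexes this as $\prod_{1\le j\le n-m,\ i+j\le n} F(x_i,\chi(y_j))$, which is precisely the stated formula for $[\flag V_m]_\Omega$; note that the identification of $\iota_{m*}(1)$ with the fundamental class of $\flag V_m\simeq \Omega_{\nu_m}^{(n)}$ uses Lemma 3.4's description of the embedding as a smooth closed subscheme and the compatibility $(\iota_m)_*[\flag V_m]_\Omega=[\flag V_m]_\Omega$ coming from the fact that $[\flag V_m]_\Omega=\pi^*_{\flag V_m}(1)$.

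For the second assertion, I would combine Proposition 3.3 with the projection formula. Proposition 3.3 represents any $P\in\Omega^*(\flag V_m)$ as a polynomial in $x_1^{(m)},\ldots,x_m^{(m)}$ with coefficients in $\Omega^*(X)$; using (i) I lift it to the same polynomial $\widetilde{P}$ in $x_1^{(n)},\ldots,x_m^{(n)}$ on $\flag V_n$, so that $\iota_m^*\widetilde{P}=P$. The projection formula then yields
\begin{equation*}
\iota_{m*}(P)=\iota_{m*}\bigl(\iota_m^*\widetilde{P}\cdot 1\bigr)=\widetilde{P}\cdot\iota_{m*}(1)=\widetilde{P}\cdot[\flag V_m]_\Omega,
\end{equation*}
which is the claimed description (well-definedness in $P$ being automatic since any ambiguity in the lift lies in $\ker\iota_m^*$, whose image under $\iota_{m*}(1)\cdot(-)$ vanishes by the same projection formula).

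The main obstacle I foresee is bookkeeping: making sure that at each step of the induction the variables $x_i^{(k+1)}$ that appear in $(\iota_k^{k+1})_*(1)$ are correctly identified with (and lift) the variables appearing in the partial product $A_k$, and that the $y$-index $n-k$ shifts in the right direction. This is entirely mechanical once the pull-back rules (i)--(ii) are invoked, but is the one place where a sign/index error would propagate through the argument.
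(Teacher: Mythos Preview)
Your proposal is correct and follows essentially the same route as the paper: induction along the chain of one-step inclusions $\iota_k^{k+1}$, using Lemma~3.4 at each step together with the projection formula (which the paper packages as ``the second statement'' via the surjectivity of $\iota_m^*$). The only cosmetic difference is that the paper runs the induction on $l=n-m$ with the factorization $\iota_m=\iota_{m+1}\circ\iota_m^{m+1}$, while you unroll this into the full composite and track the partial products $A_k$ explicitly; your bookkeeping of the indices and lifts is accurate.
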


\begin{proof}
First of all let us observe that, in view of the surjectivity of $\iota_m^*$, the compatibility between the structures of $\Omega^*(\flag V_m)$ (viewed as a ring and as a $\Omega^*(\flag V_n)$-module) reduces the second claim to the first. 
In order to obtain the expression of the fundamental class of $\flag V_m$, we proceed by induction on $l=n-m$ through a repeated use of lemma \ref{lem push}. Since $\iota_{n-1*}(1_{\flag V_{n-1}})=[\flag V_{n-1}]_\Omega$, the base of the induction $l=1$ is precisely the lemma and by inductive hypothesis we can assume that 
$$[\flag V_{m+1}]_\Omega:=[\flag V_{m+1}\stackrel{\iota_{m+1}}\longrightarrow\flag V_n]_\Omega=\iota_{m+1*}(1_{\flag V_{m+1}})
=\prod_{\substack{i+j\leq n\\1\leq j\leq n-m-1}} F\Big(x_i, \chi(y_j)\Big).$$
It now suffices to notice that, in view of the second statement, we have the following chain of equalities
$$[\flag V_m]_\Omega=\Big(\iota_{m+1*}\circ(\iota_{m}^{m+1})_*\Big)(1_{\flag V_m})=(\iota_{m}^{m+1})_*(1_{\flag V_m})\cdot[\flag V_{m+1}]_\Omega = \prod_{i=1}^m F\Big(x_i,\chi(y_{n-m})\Big)\cdot[\flag V_{m+1}]_\Omega,$$
which yield the result once one combines together the two products.
\end{proof}

\begin{remark}
It is worth pointing out that the previous proposition recovers in a more systematic way the computation of  $[\Omega_{w_0}^{(n)}]_\Omega$ which appeared in \cite[Proposition 4.5]{ThomHudson}.  
\end{remark}


\begin{corollary}\label{cor Schubert classes}
Let $w\in S_n$ such that there exist $m<n$ and $w'\in S_m$ for which $e_m(w')=w$. Then in $\Omega^*(\flag V_n)$ the fundamental class of $\Omega^{(n)}_w$ can be written as
$$[\Omega^{(n)}_w]_\Omega=[\Omega^{(m)}_{w'}]_\Omega\cdot 
\prod_{\substack{i+j\leq n\\1\leq j\leq n-m}} F\Big(x_i, \chi(y_j)\Big),$$
where $[\Omega^{(m)}_{w'}]_\Omega$ is viewed as a polynomial in the Chern roots $x_1,\tred,x_m$ with coefficients in $\Omega^*(X)$.
\end{corollary}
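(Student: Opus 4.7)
The plan is to bootstrap the corollary directly from Theorem \ref{thm flags class} together with the geometric identification of Schubert varieties supplied by Proposition \ref{prop induction geometry}. The key observation is that the hypothesis $e_m(w')=w$ places us in exactly the situation covered by that proposition: the Schubert subscheme $\Omega^{(n)}_w \hookrightarrow \flag V_n$ factors through the closed embedding $\iota_m : \flag V_m \irarrow \flag V_n$, and its image inside $\flag V_m$ is precisely $\Omega^{(m)}_{w'}$. Hence one has a commutative triangle of closed embeddings
$$
\Omega^{(m)}_{w'} \irarrow \flag V_m \stackrel{\iota_m}{\irarrow} \flag V_n,
$$
whose composition agrees with the closed embedding $\Omega^{(n)}_w \irarrow \flag V_n$ under the isomorphism of Proposition \ref{prop induction geometry}.

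Next, I would invoke functoriality of push-forwards and compatibility of fundamental classes with closed embeddings to deduce
$$
[\Omega^{(n)}_w]_\Omega \;=\; \iota_{m*}\bigl([\Omega^{(m)}_{w'}]_\Omega\bigr)
$$
as elements of $\Omega^*(\flag V_n)$. Concretely, the outer fundamental class is by definition the push-forward along the composite embedding, and the composite push-forward factors through $\iota_{m*}$; since the inner push-forward of $1$ recovers $[\Omega^{(m)}_{w'}]_\Omega$ inside $\Omega^*(\flag V_m)$, the displayed equality follows.

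Now I would use the explicit description of $\iota_{m*}$ provided by the second half of Theorem \ref{thm flags class}. Via Proposition \ref{prop Flag ring}, $\Omega^*(\flag V_m)$ is generated as a $\Omega^*(X)$-module by polynomial expressions in the Chern roots $x_1,\ldots,x_m$ of $\pi_m^*V_m$, so we may write $[\Omega^{(m)}_{w'}]_\Omega = P(x_1,\ldots,x_m)$ with $P$ a polynomial with coefficients in $\Omega^*(X)$. Theorem \ref{thm flags class} then gives
$$
\iota_{m*}(P) \;=\; P \cdot [\flag V_m]_\Omega \;=\; [\Omega^{(m)}_{w'}]_\Omega \cdot \prod_{\substack{i+j\leq n\\ 1\leq j\leq n-m}} F\Bigl(x_i,\chi(y_j)\Bigr),
$$
which is exactly the claimed formula.

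The only genuinely delicate point is the compatibility asserted in the second step, namely that $[\Omega^{(n)}_w]_\Omega$ really coincides with $\iota_{m*}([\Omega^{(m)}_{w'}]_\Omega)$ and not merely with some rational multiple of it. This is where one uses that $\Omega^{(m)}_{w'}$ is an l.c.i.\ subscheme so that its fundamental class is defined via pull-back along its structural morphism, combined with the functoriality of push-forwards along the tower of closed embeddings; once this identification is in place, the remainder is a direct substitution of the formula for $[\flag V_m]_\Omega$ from Theorem \ref{thm flags class}.
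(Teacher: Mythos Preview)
Your proposal is correct and follows essentially the same route as the paper: invoke Proposition \ref{prop induction geometry} to identify $\Omega^{(n)}_w$ with $\Omega^{(m)}_{w'}$ over $\flag V_n$, deduce $[\Omega^{(n)}_w]_\Omega=\iota_{m*}\bigl([\Omega^{(m)}_{w'}]_\Omega\bigr)$ by functoriality of push-forwards, and then apply the explicit description of $\iota_{m*}$ from Theorem \ref{thm flags class}. Your extra remark about the l.c.i.\ hypothesis is a reasonable caveat that the paper leaves implicit.
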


\begin{proof}
Since, in view of proposition \ref{prop induction geometry}, the Schubert varieties $\Omega_w^{(n)}$ and $\Omega_{w'}^{(m)}$ are isomorphic as schemes over $\flag V_n$, we have 
$$[\Omega_w^{(n)}]_\Omega= [\Omega_w^{(n)}\irarrow \flag V_n]_\Omega=
\iota_{m*}\left( [\Omega_{w'}^{(m)}\irarrow \flag V_m]_\Omega\right)=
\iota_{m*}\left([\Omega_{w'}^{(m)}]_\Omega\right)=
[\Omega_{w'}^{(m)}]_\Omega \cdot \iota_{m*}(1_{\Omega^*(\flag V_m)}).$$
To complete the proof it now suffices to apply  theorem \ref{thm flags class}.
\end{proof}

\subsection{Symplectic flag bundles}
We now present the adaptation of the previous results to the case of bundles endowed with a nondegenerate skew-symmetric form. As before, we begin by introducing the indexing set.
\paragraph{The hyperoctahedral group:}
One important extension of the symmetric group is the hyperoctahedral group $\mathbf{W}_n$. It can be viewed as a subgroup of the permutations over the set $\{1,\overline{1},2,\overline{2},\tred,n,\overline{n}\}$, where $\bar{i}$ should be thought as $-i$. For this reason the elements of $\mathbf{W}_n$ are sometimes referred to as signed permutations.
 To be more specific, $\mathbf{W}_n$ consists of all permutations $w$ such that $w(\overline{i})=\overline{w(i)}$  for all $i\in\{1,\tred,n\}$.  Notice that in view of this defining property, every element is completely determined once one has set its restriction on $\{1,\tred,n\}$. For notational convenience we will also allow  $n$ to be zero 0, in which case we set $\mathbf{W}_0:=\{id\}$.

Exaclty as in the case of the symmetric group, one can define a notion of length for signed permutations $l^\mathbf{W}$ by considering their decompositions in terms of the elementary transpositions $s^\mathbf{W}_0=(1\overline{1})$ and $s^\mathbf{W}_1,\tred,s^\mathbf{W}_{n-1}$, where $s^\mathbf{W}_i=(i\ i+1)(\overline{i}\ \overline{i+1})$. In this case the longest element $w_0^{\mathbf{W}(n)}$ is $(1\ \overline{1})(2\ \overline{2})\trecd(n\ \overline{n})$. To every $l$-tuple of indices $0\leq i_j\leq n-1$ we associate $s_I^\mathbf{W}:=s_{i_1}^\mathbf{W}\trecd s_{i_l}^\mathbf{W}$ and we say that $I$ is a minimal decomposition of $s_I^\mathbf{W}$ if $l^\mathbf{W}(s_I^\mathbf{W})=l.$

 Finally, let us relate hyperoctahedral groups of different size. The image of $w$ under the embedding  $e^\mathbf{W}_{n-1}:\mathbf{W}_{n-1}\rightarrow \mathbf{W}_n$ is given by $e^\mathbf{W}_{n-1}(w)(i)=w(i)$ for $i\in \{1,\tred, n-1\}$ and by $e^\mathbf{W}_{n-1}(w)(n)=\overline{n}$. Through composition one obtains the inclusions $e_m^\mathbf{W}:\mathbf{W}_m\rightarrow \mathbf{W}_n$ and it is easy to check that also in this case $w_0^{\mathbf{W}(m)}$ is mapped to $w_0^{\mathbf{W}(n)}$. In line with the notation used for the symmetric group, $\nu^\text{\tiny $C$}_m$ will denote $e^\mathbf{W}_m(id)$.
In general, unless some confusion is likely to arise, we will drop the superscript $\mathbf{W}$ from the notation. 

The description of the hyperoctahedral group in terms of signed permutations proves to be the most suited when one has to deal with inductive arguments, however it makes the definition of Schubert varieties a little more involved, if compared with its interpretation as a subgroup of $S_{2n}$. In particular, to be able to express the rank conditions we will need the unique  bijection $g:\{1,\tred,2n\}\rightarrow\{1,\overline{1},\tred,n,\overline{n}\}$ which preserves the ordering induced by $\ZZ$ on both sets.

\subsubsection*{Other identifications between Schubert varieties}
Let $X\in\SM$. Given a vector bundle $V_n\rightarrow X$ of rank $2n$, endowed with an everywhere nondegenerate skew-symmetric form $\langle\ ,\, \rangle: V_n\otimes V_n\rightarrow \mathcal{O}_X$, it is possible to construct an analogue of the full flag bundle, which parametrises isotropic flags, i.e. flags of subbundles on which the form is identically 0. The construction of $\flagC V_n$, exactly as it happens in the type A case, is realised by means of a sequence of projective bundles.
 First of all one considers $\Proj (V_n)\stackrel{\pi}\longrightarrow X$ with its associated canonical quotient line bundle $\mathcal{O}(1)$. If the kernel of the projection $\pi^*V_n\rightarrow \mathcal{O}(1)$ is denoted $K$ and we set $U_1^{(n)}:=K^\perp$, then $\langle\ ,\, \rangle$ induces a non degenerate form on $K/U_1^{(n)}$, which has rank $2n-2$ and $U_1^{(n)}$, being a line bundle, is automatically isotropic since the form is skew-symmetric. It is therefore possible to repeat this procedure, assembling together all the isotropic line bundles that have been obtained into a full isotropic flag $U^{(n)}\unddot=(0=U_0^{(n)}\subset U_1^{(n)}\subset \trecd \subset U_n^{(n)})\irarrow \pi^*_n V_n$.
 These bundles are all defined over the last of the projective bundles, which will be denoted $\flagC V_n\stackrel{\pi_n}\longrightarrow X$. Also in this case $U^{(n)}\unddot$ represents the universal flag and the line bundles $U^{(n)}_{n+1-i}/U^{(n)}_{n-i}$ arising from the filtration will be denoted $M^{(n)}_i$. Notice furthermore that every full isotropic flag can be completed to a full flag by taking the orthogonal: for example in our case one sets $U^{(n)}_{n+i}:=U^{(n)\perp}_{n-i}$. It follows from the nondegeneracy of $\langle\ ,\, \rangle$ that the line bundles associated to the extension of the flag are dual to those of the original part, more precisely one has 
\begin{align}
U^{(n)}_{n+i}/U^{(n)}_{n-1+i}:=(M_i^{(n)})^\vee.
\end{align} 
It is worth mentioning that also $\flagC V_n$ is characterised by a universal property: for all morphisms $f:Y\rightarrow X$ and for every full isotropic flag $I\unddot$ of $f^*V_n$, there exists a unique morphism $\widetilde{f}:Y\rightarrow \flagC V_n$ such that $f=\pi_n\circ\widetilde{f}$ and $\widetilde{f}^*U\unddot^{(n)}=I\unddot$.

 Exactly as for full flag bundles, the choice of a full flag of isotropic subbundles $W\unddot=(0=W_0\subset W_1\subset \trecd \subset W_n)$ with associated line bundles $L_i:=W_i/W_{i-1}$ allows one to define Schubert varieties. For $w\in \mathbf{W}_n$ we set
$$\Omega^{(n)}_w:= \left\{x\in \flagC V\ |\ {\rm dim}_k\Big(\pi_n^*W_i(x)\cap U^{(n)}_j(x)\Big)\geq r_w\big(g(i),j\big) \ \text{for } (i,j)\in \{1,\tred,2n\}\times\{1, \tred, n\} \right\}.$$
Moreover, also in this case some of these subschemes are isomorphic to symplectic flag bundles of smaller rank and it is possible to identify their Schubert varieties with those of $\flagC V_n$. In this case one sets $V_m:=W^\perp_{n-m}/W_{n-m}$ and then it becomes possible to consider the Schubert varieties of $\flagC V_m$ associated to the induced flag $W\unddot/W_{n-m}$. Notice that it makes sense to include 0 among the possible values of $m$: in this case  $\flagC V_m$ will simply coincides with $X$.  

Thanks to its universal property we can again relate $\flagC V_n$ to the other flag bundles.
 In fact, the universal isotropic flag $U^{(m)}\unddot\irarrow \pi^*_m V_m$ can be pulled back to $\pi^*_m V_n$ and combined with $(\pi_m^* W_1\subset \trecd \subset \pi_m^* W_{n-m})$ to give rise to the full isotropic flag of $\pi^*_m V_n$ which yields $\iota_m^\text{\tiny $C$}:\flagC V_m\rightarrow \flagC V_n$. In terms of the line bundles arising from the given filtrations we have the following identifications:
\begin{align} \label{eq pullbackC}
i) \ \iota_m^{\text{\tiny $C$}*}M_j^{(n)}=M_j^{(m)} \quad \text{ for }\  1\leq j \leq m;
\quad ii) \ \iota_m^{\text{\tiny $C$}*}M_{j}^{(n)}=\pi^*_m L_{n+1-j}  \quad \text{ for }\   m+1\leq j \leq n.
\end{align} 



We conclude our description of the geometry of symplectic flag bundles by clarifying how the different families of Schubert varieties are related. This leads us to the analogues of  lemma \ref{lemma Schubert flag} and proposition \ref{prop induction geometry}.

\begin{lemma}\label{lemma Schubert flagC}
For every $m < n$ the flag bundle $\flagC V_m$ and $\Omega_{\nu_m^\text{\tiny $C$}}^{(n)}$ are isomorphic as  schemes over $\flagC V_n$.  
\end{lemma}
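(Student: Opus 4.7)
The strategy is to mimic the proof of lemma \ref{lemma Schubert flag}, making the necessary adjustments for the isotropic structure and for the signed permutation combinatorics of $\nu_m^\text{\tiny $C$}$. First I would check that $V_m=W_{n-m}^\perp/W_{n-m}$ really is a rank-$2m$ bundle carrying an induced non-degenerate skew-symmetric form: since $W_{n-m}$ has rank $n-m$ and is isotropic, $W_{n-m}^\perp$ has rank $n+m$ and contains $W_{n-m}$, and non-degeneracy on the quotient is standard. This ensures $\flagC V_m$ is a legitimate symplectic flag bundle and that $\iota_m^\text{\tiny $C$}$ is well-defined as recalled just before the lemma.

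The heart of the argument is to give a fibre-product characterisation of $\Omega^{(n)}_{\nu_m^\text{\tiny $C$}}$. Unwinding the rank function of $\nu_m^\text{\tiny $C$}$, which satisfies $\nu_m^\text{\tiny $C$}(i)=i$ for $i\leq m$ and $\nu_m^\text{\tiny $C$}(i)=\overline{i}$ for $m<i\leq n$, the defining inequalities $\dim\bigl(\pi_n^*W_i(x)\cap U^{(n)}_j(x)\bigr)\geq r_{\nu_m^\text{\tiny $C$}}\bigl(g(i),j\bigr)$ force exactly the $n-m$ equalities $U^{(n)}_j=\pi_n^*W_j$ for $1\leq j\leq n-m$, while for larger indices the remaining conditions are automatically satisfied on this locus, being implied by the isotropy of $U^{(n)}\unddot$ together with the orthogonality $W_{n-m}\subset W_{n-m}^\perp$. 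Each equality can be written scheme-theoretically as the vanishing of the top exterior power of the composite $\pi_n^*W_j\to\pi_n^*V_n/U^{(n)}_j$, so $\Omega^{(n)}_{\nu_m^\text{\tiny $C$}}$ is realised as the fibre product over $\flagC V_n$ of the zero sections of these bundles.

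With this characterisation in place, the two morphisms are produced from universal properties. The map $\flagC V_m\to\Omega^{(n)}_{\nu_m^\text{\tiny $C$}}$ is simply $\iota_m^\text{\tiny $C$}$: by construction its associated full isotropic flag of $\pi_m^*V_n$ has its lower $n-m$ terms equal to $\pi_m^*W_1\irarrow\cdots\irarrow\pi_m^*W_{n-m}$, so the maps $f_{j,j}$ vanish after pull-back and $\iota_m^\text{\tiny $C$}$ factors through the fibre product. Conversely, on $\Omega^{(n)}_{\nu_m^\text{\tiny $C$}}$ the equalities above combined with the isotropy of $U^{(n)}\unddot$ yield $U^{(n)}_n\subset\pi_n^*W_{n-m}^\perp$, so the subbundles $U^{(n)}_{n-m+j}/\pi_n^*W_{n-m}$ with $1\leq j\leq m$ land in $\pi_n^*V_m$ and form a full isotropic flag for the induced form; the universal property of $\flagC V_m$ then produces a morphism $\Omega^{(n)}_{\nu_m^\text{\tiny $C$}}\to\flagC V_m$ over $X$, which is in fact over $\flagC V_n$ by construction.

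Finally I would conclude by uniqueness: the two compositions are endomorphisms of $\flagC V_m$ and $\Omega^{(n)}_{\nu_m^\text{\tiny $C$}}$ whose associated tautological flags are preserved, so the defining universal properties force them to be the identities. The main obstacle I foresee is the combinatorial step at the beginning of the second paragraph, namely the careful translation of the rank data $r_{\nu_m^\text{\tiny $C$}}(g(i),j)$ through the order-preserving bijection $g$ that interleaves positive and barred indices, and the verification that the only non-trivial constraints produced on $\Omega^{(n)}_{\nu_m^\text{\tiny $C$}}$ are exactly the $n-m$ equalities quoted and nothing more. Once this bookkeeping is pinned down, the remainder of the proof is a direct symplectic analogue of the type A case treated in lemma \ref{lemma Schubert flag}.
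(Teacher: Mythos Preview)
Your proposal is correct and follows exactly the approach the paper intends: the paper's own proof consists of the single sentence ``The proof essentially follows that of lemma \ref{lemma Schubert flag},'' and what you have written is precisely a careful symplectic adaptation of that argument, including the fibre-product characterisation via vanishing of the maps $f_{j,j}$, the construction of the two inverse morphisms from the universal properties of $\Omega^{(n)}_{\nu_m^\text{\tiny $C$}}$ and $\flagC V_m$, and the uniqueness step. Your identification of the combinatorial translation of $r_{\nu_m^\text{\tiny $C$}}(g(i),j)$ as the only genuinely new bookkeeping is apt, and your reading of $\nu_m^\text{\tiny $C$}$ is correct.
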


\begin{proof}
The proof essentially follows that of lemma \ref{lemma Schubert flag}.
\end{proof}

\begin{proposition}\label{prop induction geometryC}
For every $w\in \mathbf{W}_n$ the isomorphism of lemma \ref{lemma Schubert flagC} induces the following isomorphism
$$\Omega^{(m)}_w\simeq \Omega^{(n)}_{e_m(w)}$$
of schemes over $\flagC V_n$.
\end{proposition}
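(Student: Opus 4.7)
My plan is to mirror the proof of proposition \ref{prop induction geometry}, reducing first to the case $m = n-1$ by induction on the codimension $n - m$. The general statement then follows by iterating, since the isomorphism of lemma \ref{lemma Schubert flagC} is built by composing the one-step embeddings $\flagC V_j \hookrightarrow \flagC V_{j+1}$, and each step contributes a factor $e^{\mathbf{W}}_{j}$ to the indexing set.

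For the base case $m = n-1$, set $w' := e^{\mathbf{W}}_{n-1}(w)$. By definition of $e^{\mathbf{W}}_{n-1}$ we have $w'(n) = \overline{n}$, which is exactly the equation singling out $\nu^{\text{\tiny $C$}}_{n-1}$ inside $\mathbf{W}_n$. Translating via $g$, this is the rank condition that forces $U^{(n)}_1 = \pi_n^{*} W_1$, so the defining equations of $\Omega^{(n)}_{w'}$ include those of $\Omega^{(n)}_{\nu^{\text{\tiny $C$}}_{n-1}} \simeq \flagC V_{n-1}$, and consequently $\Omega^{(n)}_{w'}$ is a closed subscheme of $\flagC V_{n-1}$. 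Imposing the identification $U^{(n)}_1 = \pi_n^{*} W_1$ simultaneously forces $U^{(n)}_{2n-1} = \pi_n^{*} W_1^{\perp}$ by taking orthogonals.

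Next, I would rewrite the remaining rank conditions in terms of the data induced on $V_{n-1} = W_1^{\perp}/W_1$: the pullback of $W\unddot/W_1$ is the full isotropic flag used to define Schubert varieties on $\flagC V_{n-1}$, the universal isotropic flag there is $U^{(n-1)}\unddot$, and each rank value is to be decreased by $1$ to account for the removed one-dimensional piece. Because $w'|_{\{1,\dots,n-1\}} = w$ and because the bijection $g$ for $\{1,\dots,2n\}$ restricts compatibly to the one for $\{1,\dots,2(n-1)\}$, the shifted equations are precisely the conditions defining $\Omega^{(n-1)}_w$ inside $\flagC V_{n-1}$.

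The main obstacle I anticipate is the bookkeeping on the orthogonal half of the flag. The rank conditions indexed by $i \in \{n+1,\dots,2n\}$ involve the subbundles $W_j^{\perp}$ and the orthogonal extension of $U^{(n)}\unddot$, which have no counterpart in the type A setting. I will need to verify that, once $U^{(n)}_1 = \pi_n^{*} W_1$ has been imposed, these conditions descend correctly to the induced orthogonal extension on $W_1^{\perp}/W_1$. The key point is that the nondegenerate skew-symmetric form on $V_n$ induces one on $W_1^{\perp}/W_1$ whose orthogonality operation is compatible with reduction modulo $U^{(n)}_1$, together with the fact that $w'(n) = \overline{n}$ forces $w'(\overline{n}) = n$, so the rank condition corresponding to $i = 2n$ is automatic once the one for $i = 1$ is imposed.
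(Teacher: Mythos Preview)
Your proposal is correct and follows essentially the same route as the paper's own proof: reduce to the one-step case $m=n-1$, use the fact that every signed permutation in the image of $e^{\mathbf W}_{n-1}$ has $w'(n)=\overline{n}$ (the paper phrases this as ``presenting the inversion $(n\ \overline{n})$'') to place $\Omega^{(n)}_{e_{n-1}(w)}$ inside $\flagC V_{n-1}$, and then reinterpret the remaining rank conditions on the quotient $W_1^\perp/W_1$ to match those of $\Omega^{(n-1)}_w$. Your explicit discussion of the orthogonal half of the flag is more detailed than the paper's terse ``eliminating the unnecessary ones or modifying the others accordingly,'' but the underlying argument is the same.
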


\begin{proof}
The main point is that all signed permutations belonging to the image of $e_{n-1}$ present the inversion $(n\, \overline{n})$ and as a consequence, in view of the lemma, each $\Omega_{e_{n-1}(w)}^{(n)}$ is a subscheme of $\flagC V_{n-1}$. At this point it suffices to reinterpret the remaining conditions, either by eliminating the unnecessary ones or by modifying the others accordingly, to observe that they match those of $\Omega_w^{(m)}$.   
\end{proof}

\subsubsection*{The fundamental classes of $\flagC V_m$}
In order to obtain a closed formula for $[\flagC V_m]_\Omega$,  we will make explicit the pushforward morphisms $\iota^\text{\tiny $C$}_{m*}$. To achieve this we first need a description of the ring structure of $\Omega^*(\flagC V_n)$.  


\begin{proposition} \label{prop FlagC ring}
Let $E_n$ be a vector bundle of rank $2n$ over $X\in\SM$ endowed with an everywhere nondegenerate skew-symmetric form $\langle\ ,\, \rangle: V_n\otimes V_n\rightarrow \mathcal{O}_X$. Let $J_n$ be the ideal of $\Omega^*(X)[\textbf{t}]$ generated by the elements $f_i^{(2n)}:=c_i(E_n)-e^{(2n)}_i\big(\textbf{t},\chi(\textbf{t})\big)$ where, for $1\leq i\leq 2n$, $c_i(E_n)$ is the $i$-th Chern class of $E_n$ and $e^{(2n)}_i\big(\textbf{t},\chi(\textbf{t})\big)$ is the $i$-th elementary symmetric function in $t_1,\tred,t_n$ and their formal inverses. Then   
$$ \Omega^*(X)[\textbf{t}]/J_n\hspace{0.05 mm}\simeq\Omega^*(\flagC E_n),$$
where the isomorphism maps the $t_i$'s to the Chern roots  of a maximal isotropic subbundle of $\pi_n^*E_n$. 
\end{proposition}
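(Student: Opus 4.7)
My plan is to argue by induction on $n$, exploiting the iterative construction of $\flagC V_n$ recalled at the beginning of this subsection. The base case $n = 0$ is trivial (both sides collapse to $\Omega^*(X)$); the case $n = 1$ is Proposition \ref{prop Projskew} itself, after a short symmetric-function manipulation identifying the two ideals. For the inductive step I view $\flagC V_n$ as $\flagC(K/U_1^{(n)}) \to \Proj(V_n) \to X$, where $K := \ker(\pi_n^* V_n \srarrow \mathcal{O}(1))$ and $U_1^{(n)} := K^\perp$, so that $K/U_1^{(n)}$ is a rank $2n-2$ symplectic bundle over $\Proj(V_n)$.

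The first stage of the construction is controlled by Proposition \ref{prop Projskew}: using the morphism $\varphi'$ that sends $t_n \mapsto c_1(\mathcal{O}(-1)) = c_1(U_1^{(n)})$, we obtain
$$\Omega^*(\Proj(V_n)) \simeq \Omega^*(X)[t_n]\big/\bigl(P_{2n}^{V_n}(t_n,\chi(t_n)),\ P_{2n-1}^{V_n}(t_n,\chi(t_n))\bigr).$$
The second stage applies the inductive hypothesis to $K/U_1^{(n)} \to \Proj(V_n)$, presenting $\Omega^*(\flagC V_n)$ as $\Omega^*(\Proj(V_n))[t_1,\tred,t_{n-1}]$ modulo the ideal generated by $c_i(K/U_1^{(n)}) - e_i^{(2n-2)}\bigl(t_1,\tred,t_{n-1},\chi(t_1),\tred,\chi(t_{n-1})\bigr)$ for $1 \le i \le 2n-2$. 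Lifting to $\Omega^*(X)[\mathbf{t}]$ and invoking Proposition \ref{prop geom} to identify $c_i(K/U_1^{(n)})$ with $P_i^{V_n}(t_n,\chi(t_n))$, the combined ideal $I$ presenting $\Omega^*(\flagC V_n)$ is generated by $P_{2n}^{V_n}(t_n,\chi(t_n))$, $P_{2n-1}^{V_n}(t_n,\chi(t_n))$, and the polynomials $A_i := P_i^{V_n}(t_n,\chi(t_n)) - e_i^{(2n-2)}\bigl(t_1,\tred,t_{n-1},\chi(t_1),\tred,\chi(t_{n-1})\bigr)$ for $1 \le i \le 2n-2$.

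The crux is then to verify $I = J_n$ by a direct polynomial calculation combining two ingredients: the three-term recursion of Lemma \ref{lem id} specialised at $z_1 = t_n, z_2 = \chi(t_n)$, which gives
$$P_i^{V_n}(t_n,\chi(t_n)) = c_i(V_n) - (t_n + \chi(t_n))\,P_{i-1}^{V_n}(t_n,\chi(t_n)) - t_n\chi(t_n)\,P_{i-2}^{V_n}(t_n,\chi(t_n)),$$
and the analogous factorisation of the generating product $\prod_{j=1}^{n}(1 + t_j u)(1 + \chi(t_j) u)$, which yields
$$e_i^{(2n)}(\mathbf{t},\chi(\mathbf{t})) = e_i^{(2n-2)} + (t_n + \chi(t_n))\, e_{i-1}^{(2n-2)} + t_n\chi(t_n)\, e_{i-2}^{(2n-2)},$$
all $e_j^{(2n-2)}$ being evaluated at $t_1,\tred,t_{n-1}$ and their formal inverses. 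Subtracting these two identities, and extending $A_i$ to $i \in \{2n-1, 2n\}$ by the convention $e_j^{(2n-2)} = 0$ for $j > 2n-2$, one obtains the key relation
$$f_i^{(2n)} = A_i + (t_n + \chi(t_n))\, A_{i-1} + t_n\chi(t_n)\, A_{i-2}$$
valid for all $1 \le i \le 2n$. This immediately exhibits every $f_i^{(2n)}$ as an element of $I$, while a straightforward induction on $i$, starting from $A_0 = 0$ and $A_1 = f_1^{(2n)}$, shows conversely that every $A_i$ lies in $J_n$, completing the identification.

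The main obstacle I expect is the bookkeeping rather than any conceptual difficulty: one must lift Chern-class identities from $\Omega^*(\Proj(V_n))$ to honest polynomial identities in $\Omega^*(X)[t_n]$, and keep careful track of which set of variables each elementary symmetric function is evaluated at. Once the three-term recursion above is in place the identification of the ideals is essentially automatic, and the surjectivity of the natural map $\Omega^*(X)[\mathbf{t}] \to \Omega^*(\flagC V_n)$ — evident from the iterated presentation — combined with the equality $I = J_n$ delivers the desired isomorphism, with the identification of $t_i$ as $c_1(M_i^{(n)})$ tracked through each stage of the induction.
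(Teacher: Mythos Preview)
Your proposal is correct and follows essentially the same route as the paper's own proof: induction on $n$ via the tower $\flagC V_n \to \Proj(V_n) \to X$, with Proposition~\ref{prop Projskew} handling the first step and the inductive hypothesis the second, after which the equality of ideals is reduced to the three-term recursion of Lemma~\ref{lem id} combined with the corresponding recursion for the elementary symmetric functions. The only cosmetic difference is that for the inclusion $J_n \subseteq I$ the paper invokes the Chern-polynomial factorisation $c_u(E_n) = \prod_i c_u(M_i^{(n)}) c_u(M_i^{(n)\vee})$ to see directly that each $f_i^{(2n)}$ lies in the kernel, whereas you extract both inclusions from the single relation $f_i^{(2n)} = A_i + (t_n+\chi(t_n))A_{i-1} + t_n\chi(t_n)A_{i-2}$; either argument works and the two are equivalent rearrangements of the same identity.
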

\begin{proof}
The statement can be either obtained from \cite[Corollary 5.2]{EquivariantKiritchenko} or through a direct computation based on the construction of $\flagC E_n$ as an iterated projective bundle. 

Let us first consider the case $n=1$, for which $\flagC E_1 \simeq \Proj (E_1)$. We can apply proposition \ref{prop Projskew} and reduce the statement to checking that $\left(P_{1}^{E_1}(t_n,\chi(t_n)),P_{2}^{E_1}(t_n,\chi(t_n))\right)=J_1$. To do this we observe that by definition $P_1^{E_1}(t_n,\chi(t_n))=f_1^{(2)}$ and it is straightfoward to verify that $P_2^{E_1}(t_n,\chi(t_n))=f_2^{(2)}-h_1\cdot f_1^{(2)}$. 
 
  For the inductive step one first considers $\Proj(E_n)$ and notices that the construction of the flag bundle ensures that $\flagC E_n$ is isomorphic to $\flagC T_{n-1}$, where $T_{n-1}$ is defined to be $U^{(n)}_{2n-1}/U_1^{(n)}$. From the inductive hypothesis it follows that for $I=\left(c_i(T_{n-1})-e_i^{(2n-2)}\big(\textit{\textbf{t}},\chi(\textit{\textbf{t}})\big)\right)_{1\leq i \leq 2n-2 }$ one has
$$ \Omega^*(\Proj (E_n))[t_1,\tred, t_{n-1}]/I\simeq\Omega^*(\flagC E_n).$$
We now use propositions \ref{prop Projskew} and  \ref{prop geom} to first rewrite the coefficient ring and then express the generators of $I$ as elements of $\Omega^*(X)[t_1,\tred,t_n]/(P_{2n-1}^{E_n},P_{2n}^{E_n})$.
As a result we are able to provide a description of the kernel of the morphism $\psi:\Omega^*(X)[t_1,\tred,t_n]\rightarrow \Omega^*(\flagC E_n)$ mapping $t_i$ to $c_1(M_i^{(n)})$. This ideal, denoted $K$, is generated by  $P_i^{E_n}-e_{i}^{(2n-2)}$ with $1\leq i \leq 2n$. 

We now need to prove that $K=J_n$. First of all let us note that, in view of the decomposition $c_u(E_n)=\prod_{i=1}^{n}c_u(M_i^{(n)})c_u(M_i^{(n)\vee})$, the generators of $J_n$ clearly belong to $K$, since they are mapped to zero by $\psi$. On the other hand, by subtracting 
$$e_i^{(2n-2)}=e_i^{(2n)}-
e_{i-1}^{(2n-2)}\big(t_n+\chi(t_n)\big)
-e_{i-2}^{(2n-2)} t_n \chi(t_n)$$
 from both sides of (\ref{eq rec}), one obtains
$$P^{E_n}_i-e_i^{(2n-2)}=c_i(E_n)-e_i^{(2n)}
-(P^{E_n}_{i-1}-e_{i-1}^{(2n-2)})\big(t_n+\chi(t_n)\big)
-(P^{E_n}_{i-2}-e_{i-2}^{(2n-2)})t_n \chi(t_n).$$
Provided that $e_0^{(m)}$ and $e_{-1}^{(m)}$ are respectively interpreted as 1 and 0, it is easy to see by induction that for all $i$ we have $P^{E_n}_i-e_i^{(2n-2)}\in J_n$. 
\end{proof} 

We now turn our attention to the morphisms between the cobordism rings of the different flag bundles.
The relations imposed by  (\ref{eq pullbackC}) allow us to completely describe the pullback maps $\iota_m^{\text{\tiny $C$}*}:\Omega^*(\flagC V_n)\rightarrow \Omega^*(\flagC V_m)$. In fact, if we set $c_1(M_j^{(m)})=x_j^{(m)}$ and $c_1(\pi_m^* L_j)=y_j$, we get 
$$i)\ \iota^{\text{\tiny $C$}*}_m(x_j^{(n)})=x_j^{(m)} \ \ \text{for } \ 1\leq j \leq m; \ ii)\ \iota^{\text{\tiny $C$}*}_m(x_j^{(n)})=\chi(y_{n+1-j})\ \ \text{for }\ m+1\leq j \leq n.$$
Again the superscript of the $x_i$'s can be dropped and, in view of the surjectivity of $\iota_m^{\text{\tiny $C$}*}$,  the push-forward maps $\iota^\text{\tiny $C$}_{m*}$ are completely determined by the image of the identity. 
\begin{lemma}
Let $\iota_{\ j}^{\text{\tiny $C$}\,j+1}:\flagC V_j\irarrow \flagC V_{j+1}$ be the inclusion arising from the identification of $\flagC V_j$ with $\Omega^{(j+1)}_{\nu^\text{\tiny $C$}_j}$. Then 
 $$(\iota_{\ j}^{\text{\tiny $C$}\,j+1})_*(1_{\flag V_j})=\prod_{i=1}^{j+1} F\Big(x_i,\chi(y_{n-j})\Big) \cdot\prod_{i=1}^j F\Big(\chi(x_i),\chi(y_{n-j})\Big)$$
\end{lemma}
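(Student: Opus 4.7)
The plan is to mimic the proof of Lemma~\ref{lem push}: realise $\flagC V_j$ inside $\flagC V_{j+1}$ as the zero scheme of a regular section of an explicit vector bundle, and then extract its fundamental class as the top Chern class of that bundle.

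By Lemma~\ref{lemma Schubert flagC}, one has $\flagC V_j \simeq \Omega^{(j+1)}_{\nu^C_j}$, and unwinding the construction of $\iota^{\,C,\,j+1}_{j}$ via the universal property of $\flagC V_{j+1}$ shows that the image is precisely the locus at which $U^{(j+1)}_1$ coincides with the pull-back of the line bundle $L_{n-j}:=W_{n-j}/W_{n-j-1}$, the first term of the isotropic flag induced on $V_{j+1}$ by $W\unddot$. Equivalently, it is the zero scheme of the section $\sigma$ of
$$\pi_{j+1}^*L_{n-j}^{\vee}\otimes\bigl(\pi_{j+1}^*V_{j+1}/U^{(j+1)}_1\bigr)$$
arising from the composition $\pi_{j+1}^*L_{n-j}\hookrightarrow\pi_{j+1}^*V_{j+1}\twoheadrightarrow\pi_{j+1}^*V_{j+1}/U^{(j+1)}_1$, which fails to vanish exactly where $U^{(j+1)}_1\neq\pi^*_{j+1}L_{n-j}$.

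The next step is to verify that $\sigma$ is a regular section. The codimension of $\flagC V_j$ inside $\flagC V_{j+1}$ over $X$ equals $(j+1)^2-j^2=2j+1$, which matches the rank of $V_{j+1}/U^{(j+1)}_1$. Combined with the smoothness of both ambient and embedded flag bundles, the codimension--rank balance forces $\sigma$ to be regular, so by \cite[Lemma~2.1]{ThomHudson} the push-forward $(\iota_{\ j}^{\text{\tiny $C$}\,j+1})_*(1_{\flagC V_j})$ equals the top Chern class of the target bundle.

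The final step is a splitting-principle calculation. Since $U^{(j+1)}_{j+1}$ is Lagrangian, the isotropic flag extends via the symplectic form to a full flag of $V_{j+1}$ whose upper line-bundle quotients are dual to the lower ones; hence the Chern roots of $V_{j+1}$ split as $\{x_1,\ldots,x_{j+1}\}\cup\{\chi(x_1),\ldots,\chi(x_{j+1})\}$. Removing $x_{j+1}=c_1(U^{(j+1)}_1)$ produces the Chern roots of $V_{j+1}/U^{(j+1)}_1$, and tensoring with $L_{n-j}^\vee$ (whose first Chern class is $\chi(y_{n-j})$) converts each root $r$ into $F(r,\chi(y_{n-j}))$; the product of the $2j+1$ resulting factors yields the stated formula. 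The only delicate point is the regularity of $\sigma$, which reduces to the codimension--rank comparison; everything else is routine Chern-class bookkeeping made manageable by the symplectic splitting of the Chern roots.
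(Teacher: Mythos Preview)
Your argument is exactly the paper's: identify $\flagC V_j$ with the zero locus of the natural section of $(\pi_{j+1}^*L_{n-j})^\vee\otimes(\pi_{j+1}^*V_{j+1}/U_1^{(j+1)})$, match codimension with rank to conclude that the class is the top Chern class, and evaluate via the symplectic splitting of the Chern roots. One small bookkeeping point: deleting the root $x_{j+1}=c_1(U_1^{(j+1)})$ leaves $x_1,\dots,x_j,\chi(x_1),\dots,\chi(x_{j+1})$, so the product your computation actually produces is $\prod_{i=1}^{j}F\big(x_i,\chi(y_{n-j})\big)\cdot\prod_{i=1}^{j+1}F\big(\chi(x_i),\chi(y_{n-j})\big)$, with the two index ranges interchanged relative to the displayed statement --- and it is this swapped version that is consistent with Theorem~\ref{thm flags class C}.
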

\begin{proof}
 Except for the explicit computation of the top Chern class of  $W_1^\vee\otimes (\pi^*_{j+1}V_{j+1}/U_1^{(j+1)})$ the proof does not differ from that of lemma \ref{lem push}. 
\end{proof}

We are now in the position to obtain the analogues of theorem \ref{thm flags class} and corollary \ref{cor Schubert classes}. In both cases the structure of the proof is unchanged.

 
\begin{theorem} \label{thm flags class C}
Let $V_n$ be a vector bundle of rank $2n$ over $X\in \SM$ endowed with an everywhere nondegenerate skew-symmetric form and let $W\unddot$ be a full isotropic flag of subbundles. Fix a nonnegative integer $m\leq n$ and set $V_m:=W_{n-m}^\perp/W_{n-m}$. As an element of $\Omega^*(\flagC V_n)$ the fundamental class of the Schubert variety $\Omega^{(n)}_{\nu^\text{\tiny $C$}_m}\simeq\flagC V_m$ is given by
$$[\flagC V_m]_{\Omega}=\prod_{\substack{i+j\leq n\\1\leq j\leq n-m}} F\Big(x_i, \chi(y_j)\Big) \cdot
\prod_{\substack{i+j\leq n+1\\1\leq j\leq n-m}} F\Big(\chi(x_i), \chi(y_j)\Big),$$
where $x_i=c_1\big(U^{(n)}_i/U^{(n)}_{i-1}\big)$ and $y_i:=c_1\big(W_i/W_{i-1}\big)$.
Moreover, using the identification of proposition \ref{prop FlagC ring},, this formula can be used to explicitly describe the push-forward morphism $\iota^\text{\tiny $C$}_{m*}$. One~has
\begin{align*}
\Omega^*(\flagC V_m)&\stackrel{\iota^\text{\tiny $C$}_{m*}}\longrightarrow \Omega^*(\flagC V_n)\\
P\quad &\mapsto P\cdot [\flagC V_m]_{\Omega}
\end{align*}
where $P(x_1,\tred,x_m)$ is a polynomial with coefficients in $\Omega^*(X)$.
\end{theorem}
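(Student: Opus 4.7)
The plan is to mirror the strategy of the proof of Theorem \ref{thm flags class}, with the new one-step pushforward lemma taking the place of Lemma \ref{lem push}. The argument naturally splits into a reduction, a base case, and an induction on $l:=n-m$.

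\textbf{Reduction of the second claim to the first.} The explicit formulas for $\iota_m^{\text{\tiny $C$}*}$ stated just below Proposition \ref{prop FlagC ring} show that $\iota_m^{\text{\tiny $C$}*}$ is surjective. Combined with Proposition \ref{prop FlagC ring} (which presents $\Omega^*(\flagC V_m)$ as a quotient of a polynomial ring in $x_1,\dots,x_m$ with coefficients in $\Omega^*(X)$), this means every class in $\Omega^*(\flagC V_m)$ lifts to such a polynomial $P$. The projection formula then gives $\iota^{\text{\tiny $C$}}_{m*}(P)=P\cdot \iota^{\text{\tiny $C$}}_{m*}(1)=P\cdot[\flagC V_m]_\Omega$, so the second assertion follows once we establish the closed formula for $[\flagC V_m]_\Omega$.

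\textbf{Induction on $l=n-m$.} For $l=0$ both products are empty and $\iota^{\text{\tiny $C$}}_n$ is the identity, so there is nothing to prove. For $l=1$ (i.e.\ $m=n-1$) the formula for $[\flagC V_{n-1}]_\Omega$ reduces to $(\iota^{\text{\tiny $C$}\,n}_{\ \,n-1})_*(1)$, which is exactly the content of the one-step lemma with $j=n-1$, supplying the base case. For general $m<n-1$, factor $\iota^{\text{\tiny $C$}}_m=\iota^{\text{\tiny $C$}}_{m+1}\circ\iota^{\text{\tiny $C$}\,m+1}_{\ \,m}$. Functoriality of pushforward together with the reduction step (applied to the outer inclusion $\iota^{\text{\tiny $C$}}_{m+1}$) and the projection formula yield
$$[\flagC V_m]_\Omega\;=\;\iota^{\text{\tiny $C$}}_{m+1\,*}\bigl((\iota^{\text{\tiny $C$}\,m+1}_{\ \,m})_*(1)\bigr)\;=\;(\iota^{\text{\tiny $C$}\,m+1}_{\ \,m})_*(1)\cdot[\flagC V_{m+1}]_\Omega.$$
The inductive hypothesis supplies $[\flagC V_{m+1}]_\Omega$ as the double product restricted to $1\le j\le n-m-1$, while the one-step lemma with $j=m$ provides $(\iota^{\text{\tiny $C$}\,m+1}_{\ \,m})_*(1)$ as a product of $F$-factors all sharing $\chi(y_{n-m})$ in their second slot. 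To conclude, I identify this extra factor with the $j=n-m$ slice of each of the two products in the theorem: in the first product the constraint $i+j\le n$ with $j=n-m$ gives $1\le i\le m$, and in the second $i+j\le n+1$ with $j=n-m$ gives $1\le i\le m+1$; merging these slices with the inductive expression reassembles the complete products exactly as in the statement.

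\textbf{Expected main obstacle.} All the genuinely geometric content is absorbed into Proposition \ref{prop FlagC ring} and the one-step lemma (whose proof is the symplectic analogue of the zero-section computation for $W_1^\vee\otimes(\pi^*V_{j+1}/U_1^{(j+1)})$ in Lemma \ref{lem push}). Granting these, the remaining work is purely bookkeeping. The delicate point is to keep the Chern-root conventions straight: one must track that the ranges of $i$ produced by the one-step lemma when $j=m$ agree, index by index, with the $j=n-m$ slices of the two products $\prod F(x_i,\chi(y_j))$ and $\prod F(\chi(x_i),\chi(y_j))$ appearing in the theorem, since these two families of factors are not interchangeable. Once this matching is verified the induction closes immediately, and the corollary giving $[\Omega_w^{(n)}]_\Omega$ for signed permutations lying in the image of $e_m^{\mathbf{W}}$ follows via Proposition \ref{prop induction geometryC} in exactly the same way as Corollary \ref{cor Schubert classes}.
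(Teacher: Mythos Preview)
Your proof is correct and follows essentially the same approach as the paper: reduce the push-forward description to the fundamental-class formula via surjectivity of $\iota_m^{\text{\tiny $C$}*}$ and the projection formula, then induct on $l=n-m$ by factoring $\iota^{\text{\tiny $C$}}_m=\iota^{\text{\tiny $C$}}_{m+1}\circ\iota^{\text{\tiny $C$}\,m+1}_{\ \,m}$ and appealing to the one-step lemma for the new $j=n-m$ slice. Your explicit index bookkeeping (first product $1\le i\le m$, second product $1\le i\le m+1$) is exactly the verification the paper leaves implicit.
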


\begin{remark}
It should be noticed that in view of our conventions, which allow the case $m=0$, the previous theorem provides a formula for the fundamental class of the smallest Schubert variety~$\Omega_{w_0}^{(n)}$. 
\end{remark}

\begin{corollary} \label{cor Schubert class C}
Let $w\in \mathbf{W}_n$ such that there exist $m<n$ and $w'\in \mathbf{W}_m$ for which $e_m(w')=w$. Then in $\Omega^*(\flagC V_n)$ the fundamental class of $\Omega^{(n)}_w$ can be written as
$$[\Omega^{(n)}_w]_\Omega=[\Omega^{(m)}_{w'}]_\Omega\cdot
\prod_{\substack{i+j\leq n\\1\leq j\leq n-m}} F\Big(x_i, \chi(y_j)\Big) \cdot
\prod_{\substack{i+j\leq n+1\\1\leq j\leq n-m}} F\Big(\chi(x_i), \chi(y_j)\Big),$$
where $[\Omega^{(m)}_{w'}]_\Omega$ is viewed as a polynomial in $x_1,\tred,x_m$ with coefficients in $\Omega^*(X)$.
\end{corollary}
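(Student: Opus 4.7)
The plan is to mirror the proof of corollary \ref{cor Schubert classes}, substituting its type-A ingredients with the symplectic counterparts developed in this section. First I would invoke proposition \ref{prop induction geometryC} to identify $\Omega^{(n)}_w$ with $\Omega^{(m)}_{w'}$ as schemes over $\flagC V_n$, the identification being realised through the inclusion $\iota_m^\text{\tiny $C$}:\flagC V_m\irarrow\flagC V_n$. Since fundamental classes are compatible with push-forwards along closed immersions, this immediately gives
\begin{equation*}
[\Omega^{(n)}_w]_\Omega=\iota^\text{\tiny $C$}_{m*}\bigl([\Omega^{(m)}_{w'}]_\Omega\bigr).
\end{equation*}

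Next I would invoke the second part of theorem \ref{thm flags class C}, which in view of proposition \ref{prop FlagC ring} applies to $[\Omega^{(m)}_{w'}]_\Omega$, since the latter is representable as a polynomial in $x_1,\ldots,x_m$ with coefficients in $\Omega^*(X)$. The theorem then yields $\iota^\text{\tiny $C$}_{m*}\bigl([\Omega^{(m)}_{w'}]_\Omega\bigr)=[\Omega^{(m)}_{w'}]_\Omega\cdot [\flagC V_m]_\Omega$, and substituting the closed form for $[\flagC V_m]_\Omega$ provided by the same theorem produces exactly the claimed formula.

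The argument is essentially bookkeeping: the only conceptual ingredient is the projection-formula-based identity from theorem \ref{thm flags class C}, which already does the heavy lifting. No serious obstacle is expected; the main point is simply to recognise that the discussion of corollary \ref{cor Schubert classes} carries over verbatim once the symplectic versions of the key inputs (proposition \ref{prop induction geometryC}, proposition \ref{prop FlagC ring} and theorem \ref{thm flags class C}) are in place.
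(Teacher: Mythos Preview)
Your proposal is correct and matches the paper's approach exactly: the paper states that the proof of corollary \ref{cor Schubert class C} has the same structure as that of corollary \ref{cor Schubert classes}, which is precisely the argument you outline using proposition \ref{prop induction geometryC} and theorem \ref{thm flags class C} in place of their type-A counterparts.
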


\section{Symplectic $CK^*$-Schubert classes via Bott-Samelson resolutions}
 
The main goal of this section is to provide a description of the push-forward classes of Bott-Samelson resolutions, a family of schemes over $\flagC V_n$ closely related to Schubert varieties, and derive some consequences at the level of connective $K$-theory. In order to do so, we first recall the definition of generalised divided difference operators associated to a formal group law, which are necessary to define the family of power series appearing in the formula. 

We first describe the situation for the universal formal group law $(\Laz, F)$. For $i\in\{0,\tred,n-1\}$ the generalised divided difference operators $A_i$ are defined on $\Laz[[x_1,\tred,x_n,y_1,\tred,y_n]]$ by the following formulas:

\begin{align} \label{eq divided}
A_0(f):=(1+\sigma_0)\frac{f}{F\Big(\chi(x_1),\chi(x_1)\Big)} \quad, \quad
A_i(f):=(1+\sigma_i)\frac{f}{F\Big(x_i,\chi(x_{i+1})\Big)}.
\end{align}
Here $1$ stands for the identity operator, while $\sigma_0$ maps $x_1$ to $\chi(x_1)$ and $\sigma_i$ exchanges $x_i$ and $x_{i+1}$. For brevity we will write $A_I$ for the composition $A_{i_l}\circ\trecd\circ A_{i_1}$, where $I=(i_1,\tred,i_l)$ and the  indices belong to $\{0,\tred,n-1\}$.

\begin{remark}
It worth noticing that, although the expressions in (\ref{eq divided}) do not a priori produce power series, one can show that for every monomial the numerator of the sum of the two fractions is actually divisible by the denominator and hence the operators are actually well defined. For a proof see \cite[Section 5]{SchubertHornbostel}.  
\end{remark}

For any other formal group law $(R,F_R)$ the operators $A_i^{(R,F_R)}$ can be obtained either by tensoring with respect to the classifying morphism $\varPhi_{(R,F_R)}$ or, equivalently, by replacing the Lazard ring and the univversal formal group law with the given ones. 

\begin{remark}
It is important to stress that in general these operators do not satisfy the braid relations. In particular this implies that it is not possible to associate an operator $A_w$ to any given $w\in \mathbf{W}_n$, since different minimal decompositions of $w$  may give rise to different operators.  
\end{remark}

With these operators at hand it is possible to introduce a symplectic analogue of the power series used in \cite{ThomHudson} to describe the push-forward classes of Bott-Samelson resolutions. 
\begin{definition}\label{def power}
Fix $n\in\NN$. To every tuple $I=(i_1,\tred,i_l)$ with $i_j\in\{0,\tred,n-1\}$ we associate the power series $\mathcal{S}\mathfrak{B}^{(n)}_I\in \Laz[[x_1,\tred,x_n,y_1,\tred,y_n]]$ by setting
$$\mathcal{S}\mathfrak{B}^{(n)}_\emptyset=
\prod_{i+j\leq n} F\big( x_i, y_j\big)  \cdot \prod_{i+j\leq n+1} F\big( \chi(x_i), y_j\big)
\quad \text{and}
\quad \mathcal{S}\mathfrak{B}^{(n)}_I= A_I \left(\mathcal{S}\mathfrak{B}^{(n)}_\emptyset\right)
\text{\ for }I\neq \emptyset. $$
In a similar fashion we define $\mathcal{S}\mathfrak{B}^{(A,n)}_I\in A^*(k)[[x_1,\tred,x_n,y_1,\tred,y_n]]$ for any given OCT $A^*$. 
\end{definition}
     
We now briefly recall the definition of Bott-Samelson resolutions in the symplectic case and for this we have to introduce the partial flags $\flagC_{\widehat{j}} V_n$. Each  of these schemes parametrises the isotropic flags $F\unddot \irarrow V_n$ in which only the $(n-j)$-th level is missing and it is easy to see that $\varphi_j:\flagC V_n\rightarrow \flagC_{\widehat j} V_n$ is actually a $\Proj^1$-bundle.
Bott-Samelson resolutions, denoted $R_I\stackrel{r_I}\rightarrow \flagC V_n$, are indexed by $l$-tuples of indices $I=(i_1,\tred,i_l)$ with $i_j\in\{0,\tred, n-1\}$. Their definition is given recursively and it is based on the observation that the Schubert variety $\Omega_{w_0}\irarrow \flagC V_n$ is always isomoprhic to the base scheme $X$ and hence, according to our conventions, smooth. 
As a consequence one sets $R_\emptyset=\Omega_{w_0}$ with $r_\emptyset$ being the inclusion into $\flagC V_n$. 
For any tuple $I$ of length at least 1 we can write $I=(I',j)$, consider the following fibre product
\begin{eqnarray}\label{diagram}
\xymatrix{
  R_{I'}\times_{\flagC_{\widehat{j}}V_n} \flagC V_n \ar[rr]^{pr_2} \ar[d]_{pr_1}& &\flagC V_n\ar[d]^{\varphi_j} \\
  R_{I'}\ar[r]^{r_{I'}}& \flagC V_n \ar[r]^{\varphi_j} &\flagC_{\widehat{j}}V_n    
}
\end{eqnarray}
and define $R_I:=R_{I'}\times_{\flagC_{\widehat{j}}V_n} \flagC V_n$ and $r_I:=pr_2$. It immediately follows from the construction that $R_I$ is smooth since both $pr_1$ and $R_{I'}$ are.

The importance of Bott-Samelson resolutions comes from their relationship with Schubert varieties: each $\Omega_w$ is birational to at least one member of this family. As a matter of fact, even though they are not always mentioned explicitly, it is for this reason that Bott-Samelson resolutions appear in essentially all the computations of Schubert classes in $CH^*$ and $K^0$. The next proposition clarifies the nature of this relationship, as well as stating some related properties of Schubert varieties.

\begin{proposition}\label{prop resolution}
Let $I=(i_1,\tred, i_l)$ be a minimal decomposition of $s_I$ and set $w=w_0 s_I$. Then 

1) $r_I(R_I)=\Omega_{w}$ and  the resulting map $R_I\rightarrow \Omega_{w}$ is a projective birational morphism. In particular if $X\in \SM$, then $R_I$ is a resolution of singularities of $\Omega_{w}$;

2) i) $r_{I*}\mathcal{O}_{R_I}=\mathcal{O}_{\Omega_w}$  as coherent sheaves and therefore $\Omega_w$ is a normal scheme;

\ \ \ ii) $R^q r_{I*}\mathcal{O}_{R_I}=0$ for q>0, hence $\Omega_w$ has at worst rational singularities. 
\end{proposition}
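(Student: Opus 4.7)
The plan is to prove both statements simultaneously by a common induction on the length $l$ of the tuple $I$. The base case $l=0$ is immediate, since $R_\emptyset = \Omega_{w_0} \simeq X$ is smooth and $r_\emptyset$ is the inclusion. For the inductive step I would write $I = (I', j)$, set $w' = w_0 s_{I'}$, and note that $w = w' s_j$ with $l(w) = l(w') - 1$ (equivalently $l(w s_j) > l(w)$) by the minimality of $I$.

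The whole argument rests on one standard piece of Schubert geometry, valid in both type $A$ and type $C$: since $l(w s_j) > l(w)$, the Schubert variety $\Omega_w$ is saturated under $\varphi_j$, i.e.\ $\varphi_j^{-1}(\varphi_j(\Omega_w)) = \Omega_w$, and it is the total space of a $\Proj^1$-bundle over $\varphi_j(\Omega_w)$; moreover $\Omega_{w'} \subset \Omega_w$ is a section of this bundle, so that $\varphi_j|_{\Omega_{w'}}$ is an isomorphism onto $\varphi_j(\Omega_{w'}) = \varphi_j(\Omega_w)$. Granting this, Part 1 follows easily. The first projection $pr_1 : R_I \to R_{I'}$ is the base change of the $\Proj^1$-bundle $\varphi_j$ and is therefore itself a $\Proj^1$-bundle, so $R_I$ is smooth by the inductive hypothesis on $R_{I'}$. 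Then $r_I(R_I) = \varphi_j^{-1}(\varphi_j(r_{I'}(R_{I'}))) = \varphi_j^{-1}(\varphi_j(\Omega_{w'})) = \Omega_w$, and for generic $x \in \Omega_w$ the fiber of $r_I$ over $x$ is identified with $r_{I'}^{-1}(y)$, where $y$ is the unique intersection point of the $\Proj^1$-fiber through $x$ with the section $\Omega_{w'}$; by the inductive birationality of $r_{I'}$, this fiber is a single point.

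For Part 2 I would apply flat base change to the Cartesian square with sides $pr_1 : R_I \to R_{I'}$, $r_I : R_I \to \flagC V_n$, $\varphi_j \circ r_{I'} : R_{I'} \to \flagC_{\widehat j} V_n$ and $\varphi_j : \flagC V_n \to \flagC_{\widehat j} V_n$. Since $\varphi_j$ is flat, this yields
$$R^q r_{I*}\mathcal{O}_{R_I} \;\cong\; \varphi_j^* \, R^q (\varphi_j \circ r_{I'})_* \mathcal{O}_{R_{I'}}.$$
The Leray spectral sequence, combined with the inductive statements $r_{I'*}\mathcal{O}_{R_{I'}} = \mathcal{O}_{\Omega_{w'}}$ and $R^q r_{I'*}\mathcal{O}_{R_{I'}} = 0$ for $q > 0$, reduces the right hand side to $\varphi_j^* R^q \varphi_{j*} \mathcal{O}_{\Omega_{w'}}$. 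Because $\varphi_j|_{\Omega_{w'}}$ is an isomorphism onto $\varphi_j(\Omega_w)$, this push-forward equals $\mathcal{O}_{\varphi_j(\Omega_w)}$ in degree $0$ and vanishes for $q > 0$; pulling back along $\varphi_j$ recovers exactly $r_{I*}\mathcal{O}_{R_I} = \mathcal{O}_{\Omega_w}$ together with the vanishing of all $R^q r_{I*}\mathcal{O}_{R_I}$ for $q > 0$. Normality of $\Omega_w$ then follows from the Stein factorization of the proper birational morphism $r_I$ with smooth source, while the vanishing of the higher direct images, by definition, says that $\Omega_w$ has at worst rational singularities (the characteristic zero hypothesis is in force throughout the paper).

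The hard part is the geometric input about how $\varphi_j$ interacts with the Schubert stratification: the saturation identity $\varphi_j^{-1}(\varphi_j(\Omega_w)) = \Omega_w$ and the section property of $\Omega_{w'}$ inside $\Omega_w$. In type $A$ these facts are classical; in the symplectic case one should either transport the corresponding statements from the symplectic flag variety by relativization, using the construction of $\flagC V_n$ as an iterated projective bundle, or verify them directly from the rank conditions defining $\Omega_w^{(n)}$ by tracking what happens when the $(n-j)$-th step of the universal isotropic flag is forgotten.
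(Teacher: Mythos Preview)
The paper does not prove this proposition at all; it simply cites Appendix~C of Fulton's lecture notes for part~(1) and Ramanathan's theorem for part~(2). Your proposal, by contrast, attempts the standard inductive argument through the $\Proj^1$-fibration $\varphi_j$, which is indeed the strategy underlying those references.

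For part~(1) your argument is essentially correct, but the ``section'' claim is stronger than you need and stronger than what is true. In general $\varphi_j|_{\Omega_{w'}}$ is only \emph{birational} onto $\varphi_j(\Omega_w)$, not an isomorphism. A concrete obstruction already appears in type~$A_3$: take $v=s_2s_1s_3s_2$ and $j=1$ in the usual dimension convention (so $vs_1>v$). Then $X_{s_2s_1}\subset X_v$ and $X_{s_2s_1}$ is $\pi_1$-saturated, so $\pi_1|_{X_v}$ collapses entire $\Proj^1$ fibres over $\pi_1(X_{s_2s_1})$. Birationality on the open cell is all you use for the fibre count, so part~(1) survives this correction.

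Part~(2), however, has a genuine gap once the isomorphism claim is withdrawn. With only birationality, and knowing by induction that $\Omega_{w'}$ is normal with rational singularities, you get that $\varphi_{j*}\mathcal{O}_{\Omega_{w'}}$ is the structure sheaf of the \emph{normalisation} of $\varphi_j(\Omega_w)$, not of $\varphi_j(\Omega_w)$ itself; and the vanishing of $R^q\varphi_{j*}\mathcal{O}_{\Omega_{w'}}$ for $q>0$ would require the image to already have rational singularities. Both of these are statements about a Schubert variety in the partial flag $\flagC_{\widehat j}V_n$, i.e.\ exactly the kind of statement you are trying to establish, so the induction is circular as written. Breaking this circularity is the substantive content of Ramanathan's paper (via Frobenius splitting, or alternatively Andersen's or Demazure's cohomological arguments), which is precisely why the paper defers to that citation rather than giving a self-contained proof.
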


\begin{proof}
For part (1) see \cite[Appendix C]{SchubertFulton}. For part (2) see \cite[Theorem 4]{SchubertRamanathan}.
\end{proof}

Another important ingredient for the computation of Schubert classes is represented by the morphisms $\varphi_i^*\varphi_{i*}$, which have to be described explicitly. In fact one discovers (see \cite[Theorem 5.30]{SymmetricVishik} for the most general statement involving push-forward along $\Proj^n$-bundles and \cite[Section 2.1]{SchubertHornbostel} for a more specific treatment) that the algebraic counterpart of these push-pull morphisms is given by the generalised divided difference operators. More precisely, if $\overline{A_i}$ denotes the morphism induced on $\Omega^*(\flagC V_n)$ through the identification of proposition \ref{prop FlagC ring}, one gets
\begin{align} \label{eq divided}
\overline{A_i}=\varphi_i^*\varphi_{i*}
\end{align}
for any $i\in\{0,\tred,n-1\}$. With this description at hand we are now able to express the classes $\mathcal{R}^A_I:=r_{I*}[R_I]_A$ as elements of $A^*(\flagC V_n)$ for any choice of $A^*$. This establishes an exact analogue of the formula given in \cite[Theorem 4.8, Corollary 4.9]{ThomHudson} for full flag bundles.
  
\begin{theorem}\label{thm Bott}
Given $X\in\SM$, let $V_n$ be a vector bundle of rank $2n$  endowed with an everywhere nondegenerate skew-symmetric form and let $W\unddot$ be a full isotropic flag of subbundles of $V_n$. Set $M^{(n)}_i:=U^{(n)}_{n+1-i}/U^{(n)}_{n-i}$ and $L_i:=W_i/W_{i-1}$, where $U^{(n)}\unddot$ is the universal isotropic flag over $\flagC V_n\stackrel{\pi_n}\longrightarrow X$. For any tuple $I=(i_1,...,i_l)$ with $i_j\in\{0,1,\tred,n-1 \}$ let us consider the associated Bott-Samelson resolution $R_I\stackrel{r_I}\rightarrow \flagC V_n$. For any OCT $A^*$, its pushforward class in $A^*(\flagC V_n)$ is
given by
$$\mathcal{R}^A_I=\mathcal{S}\mathfrak{B}^{(A,n)}_I \left(c_1\big(M^{(n)}_k\big),c_1(L_j^\vee)\right)=
\mathcal{S}\mathfrak{B}^{(A,n)}_I \left(c_1\big(M^{(n)}_k\big),\chi \big(c_1(L_j)\big)\right).$$
\end{theorem}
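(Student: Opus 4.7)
The plan is to proceed by induction on the length $l$ of the tuple $I$, using the recursive construction of $R_I$ via the Cartesian square (\ref{diagram}) and the identification $\overline{A_j}=\varphi_j^*\varphi_{j*}$ from (\ref{eq divided}). The computation is engineered so that one step of the induction on the geometric side corresponds to exactly one application of the divided difference operator on the algebraic side, mirroring the defining recursion of $\mathcal{S}\mathfrak{B}^{(n)}_I$.

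For the base case $I=\emptyset$ one has $R_\emptyset=\Omega_{w_0}$ and $r_\emptyset$ is the inclusion into $\flagC V_n$, so $\mathcal{R}^A_\emptyset=[\Omega_{w_0}]_A$. Since $\nu_0^C=w_0$, applying Theorem \ref{thm flags class C} with $m=0$ (for which $V_0=W_n^\perp/W_n=0$ and $\flagC V_0\simeq X$) yields
$$[\Omega_{w_0}]_A=\prod_{i+j\leq n} F_A\big(x_i,\chi_A(y_j)\big)\cdot\prod_{i+j\leq n+1} F_A\big(\chi_A(x_i),\chi_A(y_j)\big),$$
and this is precisely $\mathcal{S}\mathfrak{B}^{(A,n)}_\emptyset$ evaluated at $x_i=c_1(M_i^{(n)})$ and $y_j=c_1(L_j^\vee)=\chi_A(c_1(L_j))$, since in that definition the $y$-variables appear without a $\chi$.

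For the inductive step, write $I=(I',j)$ and consider the Cartesian square (\ref{diagram}). Since $R_{I'}$ is smooth and $\varphi_j$ is a smooth $\Proj^1$-bundle, $pr_1$ is a smooth $\Proj^1$-bundle as well, so $R_I$ is smooth and its fundamental class is trivial. The same applies to $R_{I'}$, hence $pr_1^*[R_{I'}]_A=1=[R_I]_A$. Because $\varphi_j$ is smooth and $\varphi_j\circ r_{I'}$ is projective, the transverse base-change identity $\varphi_j^*(\varphi_j\circ r_{I'})_*=pr_{2*}\circ pr_1^*$ applies, giving
$$\mathcal{R}^A_I=r_{I*}[R_I]_A=pr_{2*}pr_1^*[R_{I'}]_A=\varphi_j^*\varphi_{j*}\big(r_{I'*}[R_{I'}]_A\big)=\overline{A_j}\,\mathcal{R}^A_{I'}.$$
Invoking the inductive hypothesis and observing that $c_1(L_j^\vee)$ is pulled back from $X$ and therefore lies in the coefficient ring $A^*(X)$ of the presentation of Proposition \ref{prop FlagC ring}, on which $\overline{A_j}$ acts as the identity (it only moves the Chern roots $x_i$), one concludes
$$\mathcal{R}^A_I=\overline{A_j}\,\mathcal{S}\mathfrak{B}^{(A,n)}_{I'}\big(c_1(M_k^{(n)}),c_1(L_j^\vee)\big)=\mathcal{S}\mathfrak{B}^{(A,n)}_I\big(c_1(M_k^{(n)}),c_1(L_j^\vee)\big),$$
using the recursion $\mathcal{S}\mathfrak{B}^{(n)}_I=A_j\,\mathcal{S}\mathfrak{B}^{(n)}_{I'}$ from Definition \ref{def power}.

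The main delicate point is the compatibility between the abstract operator $A_j$ acting on the polynomial ring $\Laz[[x_1,\ldots,x_n,y_1,\ldots,y_n]]$ and the geometric operator $\overline{A_j}=\varphi_j^*\varphi_{j*}$ acting on $A^*(\flagC V_n)$: one must know that under the presentation of Proposition \ref{prop FlagC ring} the latter acts only on the Chern-root variables and respects the $A^*(X)$-module structure, so that the substitution $y_j\mapsto c_1(L_j^\vee)$ commutes with the operator. This compatibility is exactly the content of (\ref{eq divided}) and is inherited through the universal transformation $\vartheta_A:\Omega^*\to A^*$; nothing beyond it is required, which is what makes the argument work uniformly for every oriented cohomology theory.
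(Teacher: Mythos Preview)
Your proposal is correct and follows essentially the same approach as the paper: induction on the length of $I$, with the base case supplied by Theorem \ref{thm flags class C} at $m=0$ and the inductive step obtained by applying transverse base change to the Cartesian square (\ref{diagram}) together with the identification $\overline{A_j}=\varphi_j^*\varphi_{j*}$. Your write-up is in fact more careful than the paper's in two respects: you spell out why $pr_1^*[R_{I'}]_A=[R_I]_A$ and you get the order of $\varphi_j^*\varphi_{j*}$ right (the paper's displayed chain contains the typo $\varphi_{j*}\varphi_j^*$); the only cosmetic difference is that the paper first reduces to $A^*=\Omega^*$ via $\vartheta_A$, whereas you run the argument uniformly in $A^*$.
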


\begin{proof}
First of all, let us observe that the statement of the theorem follows from the universal case $A^*=\Omega^*$ by applying the canonical morphism $\vartheta_{A^*}$. The proof is by induction on the length of the tuple $I$ and the base of the induction is given by theorem \ref{thm flags class C}. In fact, for $m=0$ one obtains an expression for $[\Omega_{w_0}^{(n)}]=\mathcal{R}_\emptyset$, which, by construction, coincides with $\mathcal{S}\mathfrak{B}^{(n)}_\emptyset$ when one substitutes the given roots.
For the inductive step we can write $I=(I',j)$ and, in view of the transversality of (\ref{diagram}), of the inductive hypothesis and of (\ref{eq divided}), we get 
$$\mathcal{R}^A_I=r_{I*}[R_I]_\Omega=r_{I*}\big(pr_1^*[R_{I'}]_\Omega\big)=
\varphi_{j*}(\varphi_j^*\mathcal{R}^A_{I'})=
 \overline{A_j}\Big(\mathcal{S}\mathfrak{B}^{(n)}_{I'}\left(c_1\big(M^{(n)}_k\big),c_1(L_j^\vee)\right)\Big). 
$$  
 To finish the proof it is enough to recall definition \ref{def power} and then observe that $\overline{A_j}\circ\overline{A_{I'}}=\overline{A_I}$.
\end{proof}

We now want to specialise our result to the case of connective $K$-theory. This choice is motivated by two  observations. Firstly, as pointed out in section \ref{section prem}, $CK^*$ has a well-defined notion of fundamental class, which allows one to pose the question of how to express Schubert classes.  Secondly, this theory represents the most general setting in which divided difference operators satisfy the braid relations and, as a consequence, the series $\mathcal{S}\mathfrak{B}_I^{(CK,n)}$ can be organised using  $\mathbf{W}_n$ as an indexing set, as opposed to tuples. More specifically, the analogy with the situation regarding full flag bundles (see \cite[Remark 4.7]{ThomHudson}) suggests to consider a naive analogue of the double $\beta$-polynomials (defined by Fomin and Kirillov in \cite{GrothendieckFomin,DoubleGrothendieckFomin}) and to set  
\begin{align} \label{eq beta}
\mathcal{S}\mathfrak{H}_w^{(n)}:= \mathcal{S}\mathfrak{B}_{I(w)}^{(CK,n)},
\end{align}
where $I(w)$ is any minimal decomposition of $w_0w$. In this case the divided difference operators $\phi_i:=\vartheta_{CK}(A_i)$ and the polynomial associated to the longest element can be described more explicitly:
$$i)\ \phi_0(P)=\frac{P(\ominus x_1,x2,\dots,x_n)-(1-\beta x_1)^2 P}{2x_1-\beta x_1^2}
\quad,\quad
\phi_i(P)=\frac{(1-\beta x_i)P-(1-\beta x_{i+1})\sigma_i(P)}{x_i-x_{i+1}}\,;$$
$$
ii)\ \mathcal{S}\mathfrak{H}_{w_0^{(n)}}^{(n)}=\prod_{i+j\leq n} x_i\oplus y_j  \cdot
 \prod_{i+j\leq n+1} \ominus x_i \oplus y_j.
$$

We are now ready to provide a description of the Schubert classes for connective $K$-theory.
\begin{theorem}
 Let $V_n\rightarrow X$ be a vector bundle of rank $2n$ endowed with an everywhere nondegenerate skew-symmetric form, with $X\in\SM$. Let $W\unddot$ be a full isotropic flag of subbundles of $V_n$ and $\{\Omega_w\}_{w\in \mathbf{W}_n}$ the associated Schubert varieties of $\flagC V_n\stackrel{\pi_n}\longrightarrow X$.  Set $M^{(n)}_i:=U^{(n)}_{n+1-i}/U^{(n)}_{n-i}$ and $L_i:=W_i/W_{i-1}$, where $U^{(n)}\unddot$ is the universal isotropic flag, then for any $w\in \mathbf{W}_n$ we have 
$$[\Omega_w^{(n)}]_{CK}=\mathcal{S}\mathfrak{H}_w^{(n)}\Big(c_1(M_i^{(n)}),c_1(L_j^\vee)\Big).$$ 
\end{theorem}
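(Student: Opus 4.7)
The plan is to combine Theorem \ref{thm Bott} with the geometric information about Bott-Samelson resolutions encoded in Proposition \ref{prop resolution} and the definition of fundamental classes in $CK^*$ provided by Lemma \ref{lem class}. Given $w\in\mathbf{W}_n$, I would fix any minimal decomposition $I=(i_1,\ldots,i_l)$ of $w_0w$, so that the associated Bott-Samelson variety $R_I\stackrel{r_I}{\longrightarrow}\flagC V_n$ factors as $R_I\stackrel{\widetilde{r}_I}{\longrightarrow}\Omega_w\hookrightarrow\flagC V_n$, with $\widetilde{r}_I$ a projective birational morphism from the smooth scheme $R_I$, thanks to part (1) of Proposition \ref{prop resolution}.

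Next I would apply Lemma \ref{lem class} to this resolution $\widetilde{r}_I\colon R_I\to\Omega_w$, viewing $\Omega_w$ as embedded in the smooth ambient scheme $\flagC V_n$. The hypotheses are met: $\Omega_w$ is irreducible (hence equidimensional) and, by part (2) of Proposition \ref{prop resolution}, has at worst rational singularities. The conclusion is
\begin{equation*}
r_{I*}[R_I]_{CK}\;=\;[\Omega_w]_{CK}\qquad\text{in }CK^*(\flagC V_n).
\end{equation*}
The left-hand side is, by definition, $\mathcal{R}_I^{CK}$, which Theorem \ref{thm Bott} applied to $A^*=CK^*$ identifies with $\mathcal{S}\mathfrak{B}_I^{(CK,n)}\bigl(c_1(M_i^{(n)}),c_1(L_j^\vee)\bigr)$. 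Since $I$ is a minimal decomposition of $w_0w$, the defining equation \eqref{eq beta} of $\mathcal{S}\mathfrak{H}_w^{(n)}$ rewrites this as $\mathcal{S}\mathfrak{H}_w^{(n)}\bigl(c_1(M_i^{(n)}),c_1(L_j^\vee)\bigr)$, which is the claimed expression.

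Implicit in this argument is the unambiguity of the right-hand side of the stated formula, i.e.\ the independence of $\mathcal{S}\mathfrak{H}_w^{(n)}$ from the minimal decomposition selected for $w_0w$. This is precisely why the specialisation to connective $K$-theory is essential: as recalled just before the theorem, the operators $\phi_i=\vartheta_{CK}(A_i)$ satisfy the braid relations in $\mathbf{W}_n$, so that $\phi_{i_l}\circ\cdots\circ\phi_{i_1}$ depends only on $w_0w$ and not on the reduced expression. Different choices of $I$ yield a priori different Bott-Samelson resolutions of the same $\Omega_w$, but Lemma \ref{lem class} forces all of them to produce the single class $[\Omega_w]_{CK}$, matching the intrinsic definition of $\mathcal{S}\mathfrak{H}_w^{(n)}$.

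The theorem is essentially a packaging statement, chaining Theorem \ref{thm Bott}, Proposition \ref{prop resolution}, and Lemma \ref{lem class}; there is no genuinely hard step once these ingredients are in place. The only point requiring real care is checking that the factorisation $r_I=i_{\Omega_w}\circ\widetilde{r}_I$ is of the form demanded by Lemma \ref{lem class}, which follows immediately from the identification $r_I(R_I)=\Omega_w$ furnished by part (1) of Proposition \ref{prop resolution}. The conceptual weight of the argument has therefore already been discharged by the preceding sections — in the construction of $\mathcal{S}\mathfrak{B}^{(n)}_\emptyset$ from the nested-flag-bundle calculation, in the identification of push-pull with generalised divided differences, and in Dai--Levine's compatibility between $CK^*$-fundamental classes and pushforward along rational resolutions.
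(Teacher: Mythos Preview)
Your proof is correct and follows essentially the same approach as the paper: take a minimal decomposition $I$ of $w_0w$, invoke Proposition~\ref{prop resolution} to see that $R_I$ is a resolution of singularities of $\Omega_w$ with at worst rational singularities, apply Lemma~\ref{lem class} to identify $[\Omega_w]_{CK}$ with $\mathcal{R}_I^{CK}$, and then use Theorem~\ref{thm Bott} together with the definition~\eqref{eq beta} to conclude. Your additional remarks on the well-definedness of $\mathcal{S}\mathfrak{H}_w^{(n)}$ via the braid relations are accurate and make the exposition more self-contained, but do not alter the logical structure.
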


\begin{proof}
If we take $I$ to be a minimal decomposition of $w_0w$, then by proposition \ref{prop resolution} we know that $R_I$ is a resolution of singularities of $\Omega_w$. Since we also know that Schubert varieties have at worst rational singularities, we are in the position to apply lemma \ref{lem class}, which allows us to conclude that $[\Omega^{(n)}_w]_{CK}=\mathcal{R}_I^{CK}$ . Finally, one applies theorem \ref{thm Bott} and, in view of (\ref{eq beta}), replaces  $\mathcal{R}_I^{CK}$ with the right hand side of the statement.
\end{proof}

We finish by showing how the previous statement implies a description of the Schubert classes for both $CH^*$ and $K^0$. For this we first set some notation. Let us recall that by definition $\mathcal{S}\mathfrak{H}_w^{(n)}$ has coefficients in $CK^*(k)$ and, as a consequence, it depends on  $\beta$. We define  $\mathcal{S}\mathfrak{S}_w^{(n)}$ and $\mathcal{S}\mathfrak{G}_w^{(n)}$ to be the power series which are obtained by setting $\beta$ equal to 0 and $-1$. 

\begin{corollary}
Under the hypothesis of the preceding theorem we obtain the following equalities in $CH^*(\flagC V_n)$ and $K^0(\flagC V_n)$. 
$$i)\ [\Omega_w^{(n)}]_{CH}=\mathcal{S}\mathfrak{S}_w^{(n)}\Big(c_1(M_i^{(n)}),c_1(L_j^\vee)\Big)\quad;\quad ii) \ 
[\mathcal{O}_{\Omega_w^{(n)}}]_{K^0}=\mathcal{S}\mathfrak{G}_w^{(n)}\Big(c_1(M_i^{(n)}),c_1(L_j^\vee)\Big).$$ 
\end{corollary}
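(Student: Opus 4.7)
The plan is to deduce both specialisations from the preceding theorem by invoking the universality of connective $K$-theory together with the Dai--Levine compatibility of fundamental classes recalled in Lemma \ref{lem class}. Recall that $CK^* = \Omega^*\otimes_\Laz \ZZ[\beta]$ carries the formal group law $F_{CK}(u,v)=u+v-\beta uv$, while $CH^*$ and $K^0$ are classified by the multiplicative laws corresponding to the ring homomorphisms $\varepsilon_0,\varepsilon_{-1}\colon \ZZ[\beta]\rightarrow \ZZ$ sending $\beta$ to $0$ and $-1$ respectively (consistent with the convention $c_1(L)=1-[L^\vee]$ used throughout the paper). The universality of $CK^*$ amongst OCTs with multiplicative FGL therefore produces canonical morphisms of OCTs $\vartheta^{CK}_{CH}\colon CK^*\rightarrow CH^*$ and $\vartheta^{CK}_{K^0}\colon CK^*\rightarrow K^0$.

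First, I would verify that $\vartheta^{CK}_{CH}$ and $\vartheta^{CK}_{K^0}$ send the $CK^*$-fundamental class of $\Omega_w^{(n)}$ to $[\Omega_w^{(n)}]_{CH}$ and $[\mathcal{O}_{\Omega_w^{(n)}}]_{K^0}$ respectively. This is where Lemma \ref{lem class} is crucial: picking a minimal decomposition $I$ of $w_0w$, the Bott--Samelson resolution $r_I\colon R_I\rightarrow \flagC V_n$ satisfies the hypotheses of that lemma (Schubert varieties have at worst rational singularities by Proposition \ref{prop resolution}), so $[\Omega_w^{(n)}]_{CK}=(r_I)_*[R_I]_{CK}$. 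Since $\vartheta^{CK}_A$ is a morphism of OCTs it commutes with push-forwards, and on $R_I$, which is smooth, the $CK^*$ fundamental class is sent to the $CH^*$ (respectively $K^0$) one by the definition in (\ref{eqn fund}). Applying $(r_I)_*$ on the target and invoking Lemma \ref{lem class} again (now in $CH^*$ and $K^0$) identifies the image with $[\Omega_w^{(n)}]_{CH}$ and $[\mathcal{O}_{\Omega_w^{(n)}}]_{K^0}$.

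Second, since $\vartheta^{CK}_A$ commutes with first Chern classes of line bundles and acts on coefficients by the underlying ring map $\ZZ[\beta]\rightarrow \ZZ$, applying it to the formula of the preceding theorem only modifies the coefficients of the polynomial $\mathcal{S}\mathfrak{H}_w^{(n)}$, replacing $\beta$ with $0$ or $-1$. By the very definition of $\mathcal{S}\mathfrak{S}_w^{(n)}$ and $\mathcal{S}\mathfrak{G}_w^{(n)}$ we obtain
\[
 \vartheta^{CK}_{CH}\bigl(\mathcal{S}\mathfrak{H}_w^{(n)}(c_1(M_i^{(n)}),c_1(L_j^\vee))\bigr)=\mathcal{S}\mathfrak{S}_w^{(n)}(c_1(M_i^{(n)}),c_1(L_j^\vee))
\]
and analogously for $K^0$. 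Combined with the previous step these are exactly the two identities in the corollary.

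The main obstacle is the coefficient bookkeeping in the second step, in particular convincing oneself that the generalised divided difference operators defining $\mathcal{S}\mathfrak{H}_w^{(n)}$ commute with the specialisation $\beta\mapsto 0,-1$. This reduces however to the fact that these operators are defined in terms of $F_{CK}$ and $\chi_{CK}$ together with the transpositions $\sigma_i$, all of which are natural in the formal group law, so the specialisation is automatic. Once this is in place the corollary follows immediately from the preceding theorem.
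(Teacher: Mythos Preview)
Your argument is correct and follows the route the paper intends: the corollary is stated without proof, and the implicit argument is precisely specialisation from the preceding $CK^*$ theorem via the canonical morphisms $\theta^{CK}_{CH}$ and $\theta^{CK}_{K^0[\beta,\beta^{-1}]}$. One small simplification: the compatibility of the $CK^*$ fundamental class with these morphisms is asserted in the paper immediately before Lemma~\ref{lem class}, so you need not re-derive it by passing through the Bott--Samelson resolution and invoking Lemma~\ref{lem class} twice; citing that compatibility directly would shorten your first step.
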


\begin{remark}
It should be noticed that, unlike what happens with the usual double $\beta$-polynomials, for $\mathcal{S}\mathfrak{H}_w^{(n)}$ one does not have a uniform description which is independent of $n$. Instances of this phenomenon are easy to find: for example one has  
$\mathcal{S}\mathfrak{S}_{(\overline{1},1)}^{(1)}=-x_1+y_1$ while $\mathcal{S}\mathfrak{S}_{(\overline{1},1)}^{(2)}=-x_1-x_2+y_1+y_2$.
\end{remark}


\bibliographystyle{siam}
\bibliography{biblio}

\begin{thebibliography}{10}

\bibitem{SchubertBressler}
{\sc P.~Bressler and S.~Evens}, {\em Schubert calculus in complex cobordism},
  Trans. Amer. Math. Soc., 331 (1992), pp.~799--813.

\bibitem{ConnectiveDai}
{\sc S.~Dai and M.~Levine}, {\em Connective algebraic {$K$}-theory}, J.
  K-Theory, 13 (2014), pp.~9--56.

\bibitem{DoubleGrothendieckFomin}
{\sc S.~{Fomin} and A.~N. {Kirillov}}, {\em {Yang-Baxter equation, symmetric
  functions and Grothendieck polynomials}}, ArXiv High Energy Physics - Theory
  e-prints,  (1993).

\bibitem{GrothendieckFomin}
{\sc S.~Fomin and A.~N. Kirillov}, {\em Grothendieck polynomials and the
  {Y}ang-{B}axter equation}, in Formal power series and algebraic
  combinatorics/{S}\'eries formelles et combinatoire alg\'ebrique, DIMACS,
  Piscataway, NJ, sd, pp.~183--189.

\bibitem{FlagsFulton}
{\sc W.~Fulton}, {\em Flags, {S}chubert polynomials, degeneracy loci, and
  determinantal formulas}, Duke Math. J., 65 (1992), pp.~381--420.

\bibitem{SchubertclassicalFulton}
\leavevmode\vrule height 2pt depth -1.6pt width 23pt, {\em Schubert varieties
  in flag bundles for the classical groups}, in Proceedings of the {H}irzebruch
  65 {C}onference on {A}lgebraic {G}eometry ({R}amat {G}an, 1993), vol.~9 of
  Israel Math. Conf. Proc., Bar-Ilan Univ., Ramat Gan, 1996, pp.~241--262.

\bibitem{PieriFulton}
{\sc W.~Fulton and A.~Lascoux}, {\em A {P}ieri formula in the {G}rothendieck
  ring of a flag bundle}, Duke Math. J., 76 (1994), pp.~711--729.

\bibitem{SchubertFulton}
{\sc W.~Fulton and P.~Pragacz}, {\em Schubert varieties and degeneracy loci},
  vol.~1689 of Lecture Notes in Mathematics, Springer-Verlag, Berlin, 1998.
\newblock Appendix J by the authors in collaboration with I. Ciocan-Fontanine.

\bibitem{SchubertHornbostel}
{\sc J.~Hornbostel and V.~Kiritchenko}, {\em Schubert calculus for algebraic
  cobordism}, J. Reine Angew. Math., 656 (2011), pp.~59--85.

\bibitem{ThomHudson}
{\sc T.~Hudson}, {\em A {T}hom-{P}orteous formula for connective {$K$}-theory
  using algebraic cobordism}, Journal of K-theory: K-theory and its
  Applications to Algebra, Geometry, and Topology, 14 (2014), pp.~343--369.

\bibitem{EquivariantKiritchenko}
{\sc V.~Kiritchenko and A.~Krishna}, {\em Equivariant cobordism of flag
  varieties and of symmetric varieties}, Transform. Groups, 18 (2013),
  pp.~391--413.

\bibitem{Lazard}
{\sc M.~Lazard}, {\em Sur les groupes de {L}ie formels \`a un param\`etre},
  Bull. Soc. Math. France, 83 (1955), pp.~251--274.

\bibitem{AlgebraicLevine}
{\sc M.~Levine and F.~Morel}, {\em Algebraic cobordism}, Springer Monographs in
  Mathematics, Springer, Berlin, 2007.

\bibitem{ElementaryQuillen}
{\sc D.~Quillen}, {\em Elementary proofs of some results of cobordism theory
  using {S}teenrod operations}, Advances in Math., 7 (1971), pp.~29--56 (1971).

\bibitem{SchubertRamanathan}
{\sc A.~Ramanathan}, {\em Schubert varieties are arithmetically
  {C}ohen-{M}acaulay}, Invent. Math., 80 (1985), pp.~283--294.

\bibitem{SymmetricVishik}
{\sc A.~Vishik}, {\em Symmetric operations in algebraic cobordism}, Adv. Math.,
  213 (2007), pp.~489--552.

\end{thebibliography}

The Center for Geometry and its Applications (GAIA), POSTECH, Pohang city, Gyeong-sang-bugdo 790-784 South Korea
\vspace*{1\baselineskip}

\noindent \textit{E-mail address}: hudsont@postech.ac.kr

\end{document}